\theoremstyle{definition}
\newtheorem{theorem}{Theorem}
\newtheorem{proposition}{Proposition}
\newtheorem{lemma}{Lemma}
\newtheorem{definition}{Definition}
\newtheorem{remark}{Remark}
\newtheorem{corollary}{Corollary}
\DeclareMathOperator*{\argmin}{arg\,min}
\DeclareMathOperator*{\var}{var}
\DeclareMathOperator*{\cov}{cov}
\DeclareMathOperator*{\tr}{tr}
\title{Simple and Sharp Generalization Bounds via Lifting
\thanks{AMS 2000 subject classifications. 
Primary 62F10, 62F12, 62F30, 62G08; Secondary 60G15, 60E15, 62C20, 94A17.}
}
\author{Jingbo Liu\thanks{Department of Statistics, University of Illinois Urbana-Champaign.
Also affiliated with Department of Electrical and Computer Engineering, the Grainger College of Engineering. 
jingbol@illinois.edu} 
}
\date{\today}
\begin{document}

\maketitle

\begin{abstract}
We develop an information-theoretic framework for bounding the supremum of stochastic processes, offering a simpler and sharper alternative to classical chaining and slicing arguments for generalization bounds. 
The key idea is a lifting argument that produces information-theoretic analogues of empirical process bounds, 
such as Dudley’s entropy integral. 
Lifting introduces permutation symmetry, yielding sharp bounds when the classical Dudley integral is loose.
This gives a simple proof of the majorizing measure theorem via the sharpness of Dudley's entropy integral for stationary processes, 
a result known well before the proof of the majorizing measure theorem.
Furthermore, the information-theoretic formulation provides soft versions of classical localized complexity bounds in generalization theory,
but is simpler and does not require the slicing argument.
We apply this approach to empirical risk minimization over Sobolev ellipsoids, obtaining sharp convergence rates in settings where previous methods are suboptimal.
\end{abstract}

\tableofcontents

\section{Introduction}

Sharp control of empirical processes is central to modern statistical learning theory, underpinning generalization bounds, minimax rates, and risk analysis for high-dimensional estimators.
Classical tools such as Dudley’s entropy integral have played a central role in this analysis, with applications ranging from nonparametric regression 
\citep{van1990estimating, wainwright2019high} 
to sparse high-dimensional models
\citep{raskutti2011minimax}.
Although certain special cases can be handled by simpler techniques, the localized version of Dudley’s integral \citep{van1990estimating} has remained a fundamental tool in many general settings.
However, its application to statistical learning may still be limited since
(i) Dudley’s integral, while widely used, can be suboptimal even for basic examples such as Sobolev ellipsoids, and
(ii) the slicing (or peeling) technique for localization often requires delicate covering number estimates, sometimes invoking deep results from approximation theory or the local theory of Banach spaces, e.g.\ in sparse regression over the $\ell_q$ ball \citep{raskutti2011minimax}.

In this paper, we introduce a ``lifting'' method for deriving new information-theoretic generalization bounds, which overcomes the limitations discussed above.
The key idea is to introduce replicated copies of a system and apply the method of types from information theory,
so that traditional ``hard'' inequalities for the supremum of a stochastic process for an index set yield ``soft'' information-theoretic inequalities for a distribution on that set.
The strength of the lifting approach lies in leveraging geometric insights involving covering and packing numbers, such as the chaining argument in Dudley's integral, 
yielding new information-theoretic inequalities that are not easily derived from traditional information-theoretic methods.
This method yields sharper localized generalization bounds and new results for empirical risk minimization in some settings.

For example, consider a Gaussian process $(X_t)_{t\in T}$ and a measure $\mu$ on $T$.
With the lifting approach, both Dudley's integral and the majorizing measure theorem yield the following equivalences for Gaussian processes (up to universal constant factors):
\begin{align}
\sup\mathbb{E}[X_Z]
\asymp\int_0^{\infty}
\sqrt{\inf_{P_{Z\hat{Z}}\in\Pi_{\sigma}(\mu)}
I(Z;\hat{Z})}
d\sigma
\asymp
\int_0^{\infty}
\sqrt{R_{\mu}(\sigma^2)}d\sigma
\label{e1}
\end{align}
where $\Pi_{\sigma}(\mu)$ denotes the set of couplings of $\mu$ and itself with mean squared error $\sigma^2$, 
$I(Z;\hat{Z})$ is the mutual information, and $R_{\mu}(\sigma^2)$ denotes the rate-distortion function (Definition~\ref{def_rd}).
The supremum in \eqref{e1} is over all conditional distributions of the index $Z$ given the process such that $P_Z=\mu$.
While the analogy between the rate-distortion function $R_{\mu}(\sigma^2)$ and covering numbers is well-known \citep{thomas2006elements,CsiszarKorner1981}, 
we are not aware of any prior literature explicitly stating \eqref{e1}.
Previously, similar lifting arguments were used for problems in selected problems from network information theory \citep{wu2018capacity,liu2020minoration,liu2021soft,liu2023soft}, but the application to empirical processes appears to be new.
More generally, ``soft'' versions of traditional measure concentration techniques are known to provide sharper convergence in some hypothesis testing and information theory problems \citep{liu2018information,liu2020second,liu2020dispersion,liu2020capacity}.

We use information-theoretic inequalities such as \eqref{e1} to control the generalization error in empirical risk minimization (ERM),
and this approach offers several advantages over the traditional approach based on slicing and Dudley's integral:

First, our approach yields sharper convergence rate in some basic estimation problems.
The reason is that \eqref{e1} is a  sharp two-sided inequality for any  $\mu$,
which is perhaps surprising, since \eqref{e1} is derived from the Dudley integral,
and the latter is not sharp for some $T$.
The reason for the sharpness of \eqref{e1} is that the lifting argument transforms a general Gaussian process into a stationary one (i.e., invariant under the transitive group of permutations; see Definition~\ref{def_stat}), even if the original process lacks this symmetry.
Since Dudley's integral is sharp for stationary processes 
(see Section~\ref{sec21} for background),
\eqref{e1} inherits the sharpness, even if the original process is nonstationary.
The sharpness of \eqref{e1} also has an interesting consequence:
by essentially 
maximizing over $\mu$ and applying the date processing inequality,
we obtain a concise new proof of the nontrivial lower bound part of the  classical majorizing measure theorem.
This new proof of the majorizing measure theorem is conceptually simple, as it closely mirrors the argument used for the Dudley integral upper bound, essentially reversing its key steps.
It may also be interpreted as saying that Fernique's result about sharpness of Dudley’s inequality for stationary processes \citep{fernique1975regularite} already contains, in essence, Talagrand's majorizing measure theorem \cite{talagrand1987regularity}, once combined with relatively standard information-theoretic techniques.
Figure~\ref{fig1} illustrates the implication relations of these results.

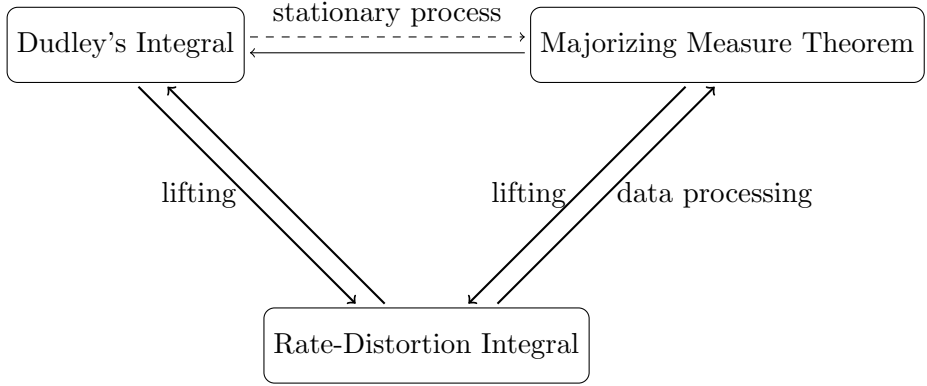
\begin{figure}
    \centering
    \begin{tikzpicture}
  \node[draw, rounded corners, rectangle, minimum width=1.2cm, minimum height=1cm] (A) at (0,0) {Dudley's Integral};
  \node[draw, rounded corners, rectangle, minimum width=1.2cm, minimum height=1cm] (B) at (8,0) {Majorizing Measure Theorem};

  \node[draw, rounded corners, rectangle, minimum width=1.2cm, minimum height=1cm] (C) at (4,-4) {Rate-Distortion Integral};

\draw[->, dashed,  shorten >=2pt, shorten <=2pt, transform canvas={yshift=3pt}] 
    (A) -- node[above] {stationary process} (B);
\draw[->, shorten >=2pt, shorten <=2pt, transform canvas={yshift=-3pt}] 
    (B) -- node[above] {~} (A);

\draw[->, thick, shorten >=2pt, shorten <=2pt, transform canvas={xshift=-11pt}] 
    (A) -- node[left] {lifting} (C);

\draw[->, thick, shorten >=2pt, shorten <=2pt] 
    (C) -- node[left] {~} (A);
  
\draw[->, thick, shorten >=2pt, shorten <=2pt] 
    (B) -- node[left] {lifting} (C);

\draw[->, thick, shorten >=2pt, shorten <=2pt, transform canvas={xshift=11pt}] 
    (C) -- node[right] {data processing} (B);
\end{tikzpicture}
    \caption{Arrows indicate implications of results, and the techniques/assumptions for the implications. For Gaussian processes, the majorizing measure theorem and the proposed rate-distortion integral are two-sided bounds,
    whereas Dudley's integral is only an upper bound unless stationarity is assumed.}
    \label{fig1}
\end{figure}

Second, our approach makes \emph{localization} of generalization bounds more straightforward. 
Localization refers to the phenomenon where the index $Z$ of the process concentrates on a subset much smaller than the full index set $T$, so that taking the supremum over all of $T$ may significantly overestimate the typical value of the process at $Z$. 
For example, in empirical risk minimization (ERM), $Z$ denotes the estimator, which tends to concentrate near zero as the sample size increases. Traditionally, localization is achieved by the  ``slicing'' (or ``peeling'') argument, which yields sharp convergence rate for some empirical risk minimization problems
\cite{wainwright2019high}.
Slicing typically involves controlling the modulus of continuity of the process through careful controls of the deviation magnitude for different slices. 
In contrast, using properties of the rate-distortion function, we can often directly upper bound the right side of \eqref{e1} using the variance of $Z\sim \mu$, which is much smaller than the size of $T$. 
Similarly, tail probability bounds on $X_Z$, based on the moment generating function of $\|Z\|_2$, can also be derived using a lifting technique.
This new method of localization does not require slicing.

To demonstrate our approach,
we obtain new results for mean estimation via ERM when the parameter set is the image of the Sobolev ellipsoid under a Lipschitz map.
Sobolev ellipsoid play a fundamental role in nonparametric statistics \citep{tsybakov2009introduction},
whereas the Lipschitz map may represent a nonlinear feature map
and is a technically challenging ingredient that necessitates the majorizing measure theorem machinery (see sub-Gaussian comparison 
\eqref{e_compare}).
Assuming smoothness parameter $\beta>1/2$,
the traditional slicing method based on Dudley’s integral \cite{van1990estimating} yields an upper bound of order $\tfrac{1}{2\beta-1}n^{-2\beta/(2\beta+1)}$ for the mean squared error.
In contrast, our method achieves the sharper rate $\tfrac{1}{\sqrt{2\beta-1}}n^{-2\beta/(2\beta+1)}$, which is beyond previous techniques (when the Lipschitz map is involved). Note that the factor $\tfrac{1}{2\beta-1}$ also appears in Dudley’s integral for the supremum of a Gaussian process indexed by the Sobolev ellipsoid, a classical example of the looseness of Dudley’s bound \citep{talagrand2014upper}, where the optimal dependence is order $\tfrac{1}{\sqrt{2\beta-1}}$.

In [arXiv:2508.18682v2], we further present lifting techniques for tail probability bounds,
and application to sparse regression,
recovering results of Raskutti et al.\ \citep{raskutti2011minimax}
for $\ell_q$ balls,
and further extending the rates to weak $\ell_q$-balls.
The results for weak $\ell_q$-balls appears to be beyond prior techniques.

\emph{Further related work:}
Connections between ERM and upper bounds of stochastic processes, 
especially characterizing the minimax rate as the solution to a fixed point equation involving the localized Gaussian complexity,
can be traced back to \cite{van1990estimating}.
In the random design setting where the symmetrization argument applies,
the localized Rademacher complexity was developed by \cite{lugosi2004complexity,bartlett2005local,koltchinskii2006local}.
An alternative approach for bounding the ERM risk using anti-concentration (small ball probability) rather than concentration was developed in 
\cite{lecue2018regularization,mendelson2010empirical}, which can be used in cases beyond sub-Gaussian noise.
On the other hand, for convex regression classes with Gaussian noise, concentration has been used to show that the mean-squared error (MSE) is tightly bounded by the solution to an equation involving a localized Gaussian width \citep{chatterjee2014new,wei2020gauss}.
These latter methods crucially rely on convexity of the regression class, hence do not apply to some statistical examples we consider (see Remark~\ref{rem2}).

Talagrand's majorizing measure theorem \citep{talagrand1987regularity} provides a sharp ``geometric'' characterization of the supremum of a Gaussian process in term of the $\gamma_2$ functional (see definition in \eqref{e2}).
Talagrand also developed several equivalent characterizations of $\gamma_2$, including admissible nets, tree-packing and labeled nets; see \cite{talagrand2014upper,van2014probability} and the references therein.
In particular, 
the labeled net (called ``index'' of an atom in the original paper \cite[Theorem~2.1]{talagrand1994constructions}) is closely related to the length of a uniquely decodable code, 
hence has an information-theoretic interpretation. 
This connection was  clarified and further explored through
the Kraft-McMillan inequality by \cite{maurer2010majorizing,chu2023majorizing}.
However, none of these prior works established  \eqref{e1}.
While general bounds for empirical processes and PAC-Bayes were derived based on $\gamma_2$ or its related forms \citep{mendelson2010empirical,audibert2003pac,koltchinskii2011oracle},
its application to computing minimax rates for specific classes appears to be limited, since $\gamma_2$ can often be more difficult to compute than directly evaluating the supremum of a process \citep{talagrand1994constructions}.

A closely related line of research, 
started by
\cite{russo2016controlling} and further developed by other authors including
\cite{xu_raginsky,dziugaite2017computing,asadi2018chaining,bu2020tightening,aminian2023information},
proved information-theoretic generalization bounds in terms of the mutual information between the random index and the process.
A basic version (see for example \cite{russo2016controlling}) is 
\begin{align}
\mathbb{E}[X_Z]\lesssim \sup_{t\in T}\|X_t\|_G\sqrt{I(Y; Z)}
\label{e_russ}
\end{align}
where
the \emph{exact sub-Gaussian norm} $\|\cdot\|_G$ will be defined in \eqref{e_psi},
and 
$Y:=(X_t)_{t\in T}$ denotes the stochastic process (in the case of finite $T$, we have $Y\in \mathbb{R}^{|T|}$).
While this can be interpreted as a strengthening of a maximal inequality \citep{jiao2018generalizations},
further refinement using the chaining argument, 
strengthening the Dudley integral, is also possible
\citep{asadi2018chaining}.
This type of results are useful in statistics and machine learning,
where $Z$ represents the output of an algorithm which has limited mutual information with the training data, 
potentially much smaller than the covering number of $T$.
Though also concerning a random index framework, their results differ from and are not directly comparable with \eqref{e1}, since their strengthening is in terms of the mutual information between $Z$ and the data, rather than the rate-distortion function of $Z$.
Their proofs typically involve information-theoretic arguments such as chain rules and nonnegativity of the relative entropy,
but not the lifting technique in the present paper.
Nevertheless, we will show in Theorem~\ref{thm_both} a result that encompasses both \eqref{e1} and \eqref{e_russ},
highlighting the versatility of the lifting technique.


After a preliminary version of this paper was posted on arXiv, its techniques have been used to establish a form of sub-Gaussian comparison \cite{van2025subgaussian} and two-sided bounds for entropic optimal transport \cite{liu2026}.

\emph{Organization:}
Section~\ref{sec_pre} recollects standard facts about upper bounds for stochastic processes, its applications to ERM, and relevant information-theoretic tools.
Section~\ref{sec_main} presents the main results: the rate-distortion integral, its sharpness, and an example of application to ERM over the Sobolev ellipsoid under a 1-Lipschitz map.
Section~\ref{sec_finite}-\ref{sec_maj} prove the upper and lower bounds for the rate-distortion integral. 
Section~\ref{sec_new} presents an equivalent form of the rate-distortion integral which  yields a new  proof of the majorizing measure theorem,
and 
Section~\ref{sec_erm} further applies it to the ellipsoid example.

\section{Preliminaries}\label{sec_pre}
\emph{Notation:}  $\wedge$ and $\vee$ to denote the min and max of two numbers,
and $[x]_+:=x\vee 0$.
Given probability measures $P$ and $Q$ on a metric space $(T,d)$, 
$W_p(P,Q):=\inf_{P_{XY}\in\Pi(P,Q)}\mathbb{E}^{\frac1{p}}[d(X,Y)^p]$ denotes the Wasserstein distance, where $\Pi(P,Q)$ denotes the set of couplings.
For brevity, $\Pi(P):=\Pi(P,P)$.
Product measures are denoted by $P\times Q$ or $P^{\otimes N}$.
The diameter of a set $T$ is 
${\rm diam}(T):=\sup_{x,y\in T}d(x,y)$.
$\lesssim$ means less than or equal to up to a universal constant,
and $\asymp$ means both $\lesssim$ and $\gtrsim$.
We denote by $\ell_2$  the Hilbert space of square-summable infinite sequences, equipped by the norm $\|\cdot\|_2$, abbreviated as $\|\cdot\|$.
The empirical distribution of a sequence $x=(x(1),x(2),\dots,x(d))$ is denoted by $\widehat{P}_x$.
Given a set $T$ in $\mathbb{R}^d$ and a matrix $A\in \mathbb{R}^{n\times d}$, define $T-T:=\{t-s\colon t,s\in T\}$.

\subsection{Upper bounds for stochastic processes}
\label{sec21}
Classically, Dudley's integral provides upper bounds on a sub-Gaussian process in terms of the covering numbers. 
Following \cite[Exercise~2.40]{vershynin2018high}, we define
the sub-gaussian variance of a random variable $X$ as
\begin{align}
{\rm var}_G(X):= \inf\{\sigma^2\colon
\mathbb{E}[e^{\lambda(X-\mathbb{E}X)}]
\le 
e^{\sigma^2\lambda^2/2}
\textrm{ for all $\lambda\in \mathbb{R}$}\},
\end{align}
and define
the \emph{exact sub-Gaussian norm} by 
\begin{align}
\|X\|_G^2={\rm var}_G(X)+(\mathbb{E}[X])^2.
\label{e_psi}
\end{align}
This is one of the several equivalent (up to universal constant) definitions of the sub-Gaussian norm; see \cite[Proposition 2.6.1]{vershynin2018high}.
Let $(X_t)_{t\in T}$ be a centered (i.e., $\mathbb{E}[X_t]=0$) sub-Gaussian process, and define the natural metric
\begin{align}
d(t,t'):=\|X_t-X_{t'}\|_G,\quad 
\forall t,t'\in T.
\label{e_dsub}
\end{align} 
Then $(T,d)$ is a metric space.
Dudley' integral states that 
\begin{align}
\mathbb{E}[\sup_{t\in T}X_t] 
\le K\int_0^{\Delta}\sqrt{\ln
\mathsf{N}(T,\lambda)}d\lambda
\label{e_dudley}
\end{align}
where $K$ is a universal constant, $\mathsf{N}(T,\lambda)$ denotes the $\lambda$-covering number of $T$,
i.e.\ the minimal number of (open) balls of radius $\lambda$ required to cover $T$,
and $\Delta$ denotes the diameter of $T$.
While usually called Dudley's integral, \eqref{e_dudley} is occasionally attributed to other authors as well;
see \cite{dudley2016vn,maurey,talagrand2014upper} for more about the history.
The Gaussian process is a special case of the sub-Gaussian process where $(X_{t_1},X_{t_2},\dots,X_{t_n})$ follows the Gaussian distribution for any $t_1,\dots,t_n$.
For Gaussian processes, a commonly referenced explicit bound is $K<24$ \citep{ledoux2013probability}.

Recall the following definition of \emph{stationary Gaussian processes}
(see e.g.\ \cite{van2014probability,talagrand2014upper}):
\begin{definition}
\label{def_stat}
    The Gaussian process $(X_t)_{t\in T}$ is called stationary if there exists a group $G$ acting on $T$ such that
    \begin{enumerate}
        \item $d(g(t), g(s)) = d(t, s)$ for all $t, s \in T$, $g \in G$ (translation invariance).
        \item For every $t, s \in T$, there exists $g \in G$ such that $t = g(s)$ (transitivity).
    \end{enumerate}
\end{definition}
It is well-known that Dudley's integral is not sharp (i.e., not a two-sided inequality up to universal constants).
However, for stationary Gaussian processes, it is known since Fernique \cite{fernique1975regularite} that Dudley's integral is sharp:
\begin{align}
\int_0^{\Delta}\sqrt{\ln
\mathsf{N}(T,\lambda)}d\lambda
\le
K'\mathbb{E}[\sup_{t\in T}X_t] 
\label{e_reverse}
\end{align}
for a universal constant $K'>0$.
\cite{maurey} provides an exposition of the stationary process story and gives a bound of $K'\le 432$.
The stationary case is particularly convenient, since the smaller balls in the chaining argument are equivalent in each level, so that the chaining process in the proof of the Dudley integral can be reversed (see \cite{van2014probability} for discussions).

For general nonstationary Gaussian processes, the same paper of Fernique \citep{fernique1975regularite} conjectured what was later known as the majorizing measure theorem,
whose nontrivial lower bound part was proved more than a decade later by Talagrand \citep{talagrand1987regularity} using a more sophisticated \emph{generic chaining} argument.
Given a probability measure $\mu$ on $T$, define 
\begin{align}
I_{\mu}(t):=\int_0^{\Delta}\sqrt{\ln\frac1{\mu(B(t,\lambda))}}d\lambda.
\label{e_imu}
\end{align}
The majorizing measure theorem states that the left side of \eqref{e_dudley} equals, up to a universal constant factor,
\begin{align}
\gamma_2(T):=\inf_{\mu}\sup_{t\in T}I_{\mu}(t).
\label{e2}
\end{align}
The usual definition of $\gamma_2(T)$ in terms of admissible partitions (developed later by Talagrand in the 90s cf.\ \cite{talagrand2014upper}) is equivalent to \eqref{e2} up to a constant factor.
Generally speaking, $\gamma_2(T)$ is more difficult to evaluate than Dudley's integral:
$\mu$ needs to be optimized for a quantity depending on $\mu(B(t,\lambda))$ over different scales of $\lambda$, 
whereas in Dudley's integral, we can calculate the covering number by optimizing for each scale separately.

Despite this, the majorizing measure theorem is appealing due to its sharpness for the Gaussian process.
A nontrivial application of $\gamma_2$ is sub-Gaussian comparison:
suppose that $(X_t)$ is a separable sub-Gaussian process,
$(Y_t)$ is a separable Gaussian process, 
and both processes are zero mean with the same natural metric on $T$.
Then 
\begin{align}
\mathbb{E}[\sup_{t\in T}X_t]\lesssim \mathbb{E}[\sup_{t\in T}Y_t].
\label{e_compare}
\end{align}
The special case where $(X_t)$ is also Gaussian is called the Gaussian comparison inequality, usually proved by an interpolation argument which is highly Gaussian-specific  \citep{ledoux2013probability}.
The more general sub-Gaussian case can be immediately seen from the majorizing measure theorem.
As noted in \cite{van2014probability},
this version of sub-Gaussian comparison is a nontrivial consequence of the majorizing measure theorem, for which no other proof is known.

It is also known (see \cite{talagrand2014upper}) that 
the following quantities are equivalent to $\gamma_2(T)$ up to constants:
\begin{align}
{\rm Fer}(T)&:=
\sup_{\mu}\int_T I_{\mu}(t)d\mu(t);
\label{e1.3}
\\
\delta_2(T)&:=
\sup_{\mu}\inf_{t\in T}I_{\mu}(t).
\label{e1.4}
\end{align}

\subsection{Empirical risk minimization (ERM)}
\label{sec_example}

Consider the mean estimation problem, where $T$ is a closed subset of $\ell_2$, and $X_1,\dots,X_n$ are i.i.d.\ random vectors in $\ell_2$ whose mean is an unknown $m\in T$. 
We assume that 
$\|X_1-m\|_G\le 1$,
where following \cite{vershynin2018high}, the sub-Gaussian norm of a random vector $Y\in \ell_2$ is defined by taking the supremum over one-dimensional projections:
\begin{align}
\|Y\|_G:=\sup_{v\colon \|v\|_2=1}\|\left<Y,v\right>\|_G.    
\end{align}
A bound on $\|Y\|_G$ can be guaranteed, for example, if $Y$ has independent coordinates with uniformly bounded sub-Gaussian norms \citep{vershynin2018high}.
For any $t\in\ell_2$,
define the empirical risk
\begin{align}
\hat{L}(t)
:=\frac1{n}\sum_{i=1}^n\|t-X_i\|_2^2.
\label{e_48}
\end{align}
The (population) risk is
$
L(t):=\mathbb{E}[\hat{L}(t)]
$.
The empirical risk minimizer (ERM) is defined as 
\begin{align}
\hat{m}:=\argmin_{t\in T}
\hat{L}(t).
\end{align}
We have 
\begin{align}
\|\hat{m}-m\|^2
&=L(\hat{m})-L(m)
\\
&\le -\hat{L}(\hat{m})
+\hat{L}(m)
+L(\hat{m})-L(m)
\\
&=\frac1{\sqrt{n}}\chi_{m-\hat{m}}
\label{e312}
\end{align}
where $\chi_z:=-2\sqrt{n}\left<z,\frac1{n}\sum_{i=1}^nX_i-m\right>$,  and $(\frac1{2}\chi_z)_{z\in T-T}$ is a centered sub-Gaussian process, whose natural metric is the $\ell_2$ metric.

More generally, the ERM framework also applies to many other problems including regression, functional estimation, matrix completion, shape-restricted regression, and isotonic regression; see \cite{chatterjee2014new,chatterjee2015risk}.
In any case, the inequality \eqref{e312} plays a key role in upper bounding the mean squared error $\mathbb{E}[\|\hat{m}-m\|_2^2]$; see e.g.\ \cite[eq (2)]{van1990estimating}.
Since $
\mathbb{E}[\hat{L}(m)
-L(m)]=0$, the expected generalization error equals $\mathbb{E}[\frac1{\sqrt{n}}\chi_{m-\hat{m}}]$, which is also connected to \eqref{e312}.
If we take the expectation of both sides of \eqref{e312}, and upper bound the right side by the Dudley integral using the covering number of $T$, we generally do not get sharp convergence rates of the mean squared error.
In fact, even for the simplest case of 1-dimensional parameter estimation, such a naive approach yields a $1/\sqrt{n}$ rate rather than the optimal $1/n$ rate.

The traditional approach for improving the bound is a ``slicing'' argument that yields local complexity bounds;
see for example \cite{van1990estimating,wainwright2019high,raskutti2011minimax} or \cite[Theorem 5.32]{van2014probability}.
Roughly speaking, if $\|\hat{m}-m\|^2=r$, then $\chi_{m-\hat{m}}\le \sup_{z\in B(r)\cap (T-T)}\chi_z$, which is bounded by some function $\omega(r)$ with high probability. 
The function $\omega(r)$ typically grows slower than $r^2$, so $r^2\lesssim \frac1{\sqrt{n}}\omega(r)$ yields an upper bound on $r$. 
More precisely, a union bound argument is applied to control the generalization error at different scales of $r$ with high probability.
While it may seem that the uniform control by the union bound could lose logarithmic factors,
this can actually be avoided by carefully scheduling of the error tolerance at each scale.

Local complexity bounds are usually based on Dudley' integral, which can be loose for general $T$ when Dudley' integral is loose.
The majorizing measure theorem, though sharp, is hard to evaluate, and is rarely used in the analysis of ERM.

\subsection{Information-theoretic tools}
\label{sec_23}
We recall standard information-theoretic measures in \cite{thomas2006elements}, using the natural base of the logarithm.
The entropy of a discrete random variable $X\sim \mu$ is $H(X):=\sum_x \mu(x)\ln\frac1{\mu(x)}$.
The relative entropy between $\mu$ and $\nu$ on the same measurable space is $D(\mu\|\nu):= \int\ln\frac{d\mu}{d\nu}d\mu$.
Mutual information for $(X,Y)\sim P_{XY}$ is defined as $I(X;Y):=D(P_{XY}\|P_X\times P_Y)$.
Conditional versions of these quantities are defined by averaging over the random variable which is conditioned on. 
For example, $D(P_{\hat{Z}|Z}\|\mu|P_Z):=\int D(P_{\hat{Z}|Z=z}\|\mu) dP_Z(z)$,
and $H(Y|X):=\int H(Y|X=x)dP_X(x)$,
where $H(Y|X=x):=\sum_y P_{Y|X=x}(y)\ln\frac1{P_{Y|X=x}(y)}$.
The lifting argument relies on the \emph{method of types}, a standard method in information theory \citep{CsiszarKorner1981} and large deviation analysis \citep{dembo2009large}.
The \emph{type} of a sequence is defined as its empirical distribution. 
Since not all distributions can be a type for a given length of the sequence, we need the following: 
\begin{definition}\label{def2}
Suppose that $\mu$ is a distribution on a finite set $\mathcal{Z}$. We say $\mu$ is \emph{rational} if there exists integer $N>0$ such that $N\mu$ is \emph{integer}, i.e., $N\mu(z)\in\mathbb{Z}$ for any $z\in \mathcal{Z}$. 
\end{definition}
If $N\nu$ is integer, the \emph{type class of $\nu$} is defined as the set of sequences with type $\nu$.
The following result is standard and can be found in \cite[Lemma~2.6]{CsiszarKorner1981}:

\begin{lemma}\label{lem_ck}
Let $\mathcal{S}_N$ be the set of all possible types of sequences in  $\mathcal{Z}^N$, 
where $\mathcal{Z}$ is a finite set with cardinality $n:=|\mathcal{Z}|$.
Then 
$|\mathcal{S}_N|={n+N-1 \choose n-1}\le (N+1)^n$.
Furthermore, if $\mu$ and $\nu$ are distributions on $\mathcal{Z}$, $N\nu$ is integer, and $\mathcal{C}$ denotes the type class of $\nu$,
then
\begin{align}
(N+1)^{-n}
\exp(-ND(\nu\|\mu))
\le 
\mu^{\otimes N}(\mathcal{C})
\le \exp(-ND(\nu\|\mu)).
\end{align}
\end{lemma}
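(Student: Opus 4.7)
The plan is the classical method-of-types computation from \cite{csiszar2011information}, organized into three steps. \textbf{Counting types:} a type is determined by the vector of counts $(N\nu(z))_{z\in \mathcal{Z}}$, a tuple of non-negative integers summing to $N$. By stars-and-bars this gives $|S_N|={N+n-1 \choose n-1}$, and the weaker bound $(N+1)^n$ is immediate since each coordinate lies in $\{0,1,\dots,N\}$. \textbf{Probability of a type class:} direct multiplication shows that every $x\in C_\nu$ has $\mu^{\otimes N}(\{x\})=\exp(-N(H(\nu)+D(\nu\|\mu)))$, so $\mu^{\otimes N}(C_\nu)=|C_\nu|\exp(-N(H(\nu)+D(\nu\|\mu)))$. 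The two probability inequalities then reduce to the cardinality bounds $(N+1)^{-n}\exp(NH(\nu))\le |C_\nu|\le \exp(NH(\nu))$.

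The upper bound on $|C_\nu|$ is immediate from specializing $\mu=\nu$: the identity becomes $|C_\nu|\exp(-NH(\nu))=\nu^{\otimes N}(C_\nu)\le 1$. The lower bound is the only nontrivial step. I would establish it via the auxiliary claim that $C_\nu$ is the most likely type class under the product measure $\nu^{\otimes N}$, i.e.\ $\nu^{\otimes N}(C_{\nu'})\le \nu^{\otimes N}(C_\nu)$ for every other type $\nu'$ with $N\nu'$ integer. Given this claim, the type classes partition $\mathcal{Z}^N$, so $1=\sum_{\nu'}\nu^{\otimes N}(C_{\nu'})\le |S_N|\,\nu^{\otimes N}(C_\nu)\le (N+1)^n\nu^{\otimes N}(C_\nu)$, which immediately gives $|C_\nu|\ge (N+1)^{-n}\exp(NH(\nu))$ as needed.

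To prove the ``most likely type class'' claim, I would write the ratio $\nu^{\otimes N}(C_{\nu'})/\nu^{\otimes N}(C_\nu)$ as a product over $z\in\mathcal{Z}$ of factorial ratios times $\nu(z)^{N(\nu'(z)-\nu(z))}$, then apply the elementary inequality $m!/k!\le m^{m-k}$ (valid for non-negative integers $m,k$ under the usual $0^0=1$ convention). This collapses the product to $\prod_z N^{N(\nu(z)-\nu'(z))}=1$, since both $\nu$ and $\nu'$ are probability distributions. The main obstacle, such as it is, is exactly this factorial inequality: it has to be checked separately for $m\ge k$ (where both sides are products of $m-k$ positive integers) and for $m<k$ (where one compares an inverse product to a negative power). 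The remainder of the proof is combinatorial bookkeeping, and no probabilistic input beyond the partition identity is needed.
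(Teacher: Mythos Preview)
Your proposal is correct and reproduces exactly the standard method-of-types argument from \cite[Chapter~1]{csiszar2011information}. The paper does not give its own proof of this lemma; it simply cites that reference, so there is nothing further to compare.
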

Lemma~\ref{lem_ck} implies the following result, which can be found in
\cite[Chapter~2, Problem~3]{CsiszarKorner1981}:

\begin{lemma}\label{lem_t2}
Suppose that $\mathcal{X}$ and $\mathcal{Y}$ are finite sets, 
$P_{XY}$ is a distribution on $\mathcal{X}\times \mathcal{Y}$, 
and $NP_{XY}$ is integer.
Let $x^N$ be a sequence of type $P_X$,
and let $\mathcal{C}\subseteq \mathcal{Y}^N$ be the type class of $P_Y$.
Define
\begin{align}
\mathcal{C}(x^N):=\{y^N\colon (x^N,y^N) \textrm{ has type $P_{XY}$}\}.
\end{align}
Then we have
\begin{align}
(N+1)^{-|\mathcal{X}||\mathcal{Y}|}
\exp(-NI(X;Y))
\le 
\frac{|\mathcal{C}(x^N)|}{|\mathcal{C}|}
\le 
(N+1)^{|\mathcal{Y}|}\exp(-NI(X;Y)).
\end{align}
\end{lemma}
Note that sequences in $\mathcal{C}(x^N)$ and $\mathcal{C}$ have the same probability under $P_Y^{\otimes N}$.
Taking $\mu=\nu=P_Y$ in Lemma~\ref{lem_ck} yields
\begin{align}
\label{e_ctype}
\quad (N+1)^{-2|\mathcal{X}||\mathcal{Y}|}
\exp(-NI(X;Y))
\le 
P_Y^{\otimes N}(\mathcal{C}(x^N))
\le (N+1)^{|\mathcal{Y}|}\exp(-NI(X;Y)).
\end{align}
We recall the rate-distortion function in information theory \citep{thomas2006elements},
which will be a counterpart of the covering number in Dudley's integral.
Given a probability measure $\mu$ on $T$ and $\sigma>0$, define 
\begin{align}
\Pi_{\sigma}(\mu)
:=\{P_{Z\hat{Z}}\colon P_{Z}=P_{\hat{Z}}=\mu;\,
\mathbb{E}[d^2(Z,\hat{Z})]\le \sigma^2
\}.
\end{align} 
\begin{definition}
\label{def_rd}
Given $\mu$ on a metric space $(T,d)$, define the rate-distortion function 
\begin{align}
R_{\mu}(\sigma^2):=\inf_{P_{UZ}}I(U;Z),
\end{align}
where the infimum is over probability measure $P_{UZ}$ on $T\times T$ satisfying $P_Z=\mu$ and $\mathbb{E}[d^2(U,Z)]\le \sigma^2$.
Define 
$r_{\mu}(\sigma):=R_{\mu}(\sigma^2)$, and 
\begin{align}
i_{\mu}(\sigma):=\inf_{
\substack{P_{Z\hat{Z}}\in\Pi_{\sigma}(\mu)}
}
I(Z;\hat{Z}).
\end{align}
\end{definition}

Observe that
\begin{align}
r_{\mu}(\sigma)
\le 
i_{\mu}(\sigma)
\le 
r_{\mu}(\sigma/2).
\label{e21}
\end{align}
Indeed, the first inequality is obvious from the definition (relaxing the $P_U=\mu$ constraint in the infimum). 
The second holds since given $\mathbb{E}[d^2(U,Z)]\le \sigma^2/4$, we can construct $\hat{Z}$ such that $\hat{Z}$ and $Z$ are conditionally i.i.d.\ given $U$. Then $\mathbb{E}[d^2(\hat{Z},Z)]
\le \mathbb{E}[(d(\hat{Z},U)+d(U,Z))^2]
\le \sigma^2$ and $I(Z;\hat{Z})\le I(U;Z)$ by the data-processing inequality.
Due to \eqref{e21}, our results can be stated in terms of either $\inf_{P_{Z\hat{Z}}\in\Pi_{\sigma}(P_Z)}I(Z;\hat{Z})$ or the rate-distortion function (up to constant).

For $Z\sim \mu$,
define 
\begin{align}
\sigma_{\rm m}(\mu):=\inf_{z\in T}\sqrt{\mathbb{E}[d^2(Z,t)]},
\end{align} 
abbreviated as $\sigma_{\rm m}$ if no confusion.
Clearly $R_{\mu}(\sigma^2)=0$ whenever $\sigma>\sigma_{\rm m}$.
Therefore $\sigma_{\rm m}$ can be thought of as a soft version of the diameter of $T$,
since the upper limit of the traditional Dudley integral can be set as the diameter.

\section{Main results}
\label{sec_main}
\subsection{A rate-distortion integral}
Our first main result concerns $\mathbb{E}[X_Z]$, where $Z$ is a random index that may depend on the realization of the process,
which is a natural consequence of our information-theoretic technique.
We first clarify some measure-theoretic aspects regarding the random index formulation.
First, if $T=\{t_0,t_1,\dots\}$ is countable,
we assume that for each $t\in T$, $X_t(\omega)$ is a measurable function of $\omega\in\Omega$ (where $\Omega$ denotes the sample space).  
Suppose that $Z(\omega)\in\{t_0,t_1,\dots\}$ is also a measurable function of $\omega\in\Omega$.
Then $X_{Z(\omega)}(\omega)$ is a measurable function of $\omega$, because for any Borel set $B\subseteq \mathbb{R}$,
\begin{align}
\{\omega\colon X_{Z(\omega)}(\omega)\in B\}
=\bigcup_{i=0}^{\infty}\{\omega\colon 
Z(\omega)=t_i\}\cap\{\omega\colon X_{t_i}(\omega)\in B\}
\label{e_meas}
\end{align}
which is measurable.

If $T$ is uncountable, measurability is not so immediate since the union in \eqref{e_meas} is uncountable.
The standard approach is to assume \emph{separability}, i.e., there is a dense countable subset $T_0=\{t_0,t_1,\dots\}\subset T$, and $X_t$ is continuous in $t\in T$ almost surely, so that $X_t$ for $t\notin T_0$ can be defined by passing a limit.
Continuity here is with respect to the topology compatible with the metric $d$.
We note that separability and continuity ensure that $X_{Z(\omega)}(\omega)$ is measurable, as long as $Z(\omega)$ is measurable,
which should suffice for most statistical applications.
Indeed, we can construct random variable $Z_k:=t_i$ where 
\begin{align}
i=\argmin_{j\in\{0,1,\dots,k\}}d(Z(\omega),t_j)
\label{e48}
\end{align}
with ties broken arbitrarily.
Assuming measurability of $Z(\omega)$, we have measurability of $d(Z(\omega),t_j)$ for each $j$, so that $Z_k$, and hence $X_{Z_k}$, is measurable.
Density of $T_0$ ensures $\lim_{k\to\infty}Z_k=Z$ almost surely, and continuity implies that $X_Z=\lim_{k\to\infty} X_{Z_k}$ is measurable, being a sequential limit of measurable functions.

\begin{definition}\label{def_comp}
Given a stochastic process $(X_t(\omega))_{t\in T}$ and distribution $\mu$ on $T$, 
define the \emph{complexity}
\begin{align}
\mathsf{w}(\mu)
:=\sup\mathbb{E}[X_Z]
\end{align}
where the supremum is over all $Z=Z(\omega)$ satisfying $P_Z=\mu$. 
\end{definition}
Clearly $\mathsf{w}(\mu)$ is analogous to the notion of Gaussian complexity of a set.

\begin{theorem}\label{thm1}
If $(X_t)_{t\in T}$ is a centered sub-Gaussian process on a countable set $T$, and $\mu$ is a distribution on $T$ for which $\sigma_{\rm m}:=\sigma_{\rm m}(\mu)<\infty$, then
\begin{align}
\mathsf{w}(\mu)
\le
2K\int_0^{\sigma_{\rm m}}
\sqrt{i_{\mu}(\sigma)}d\sigma
\le
4K\int_0^{\sigma_{\rm m}}
\sqrt{R_{\mu}(\sigma^2)}d\sigma
\label{e2.1}
\end{align}
where $K$ is the constant in \eqref{e_dudley}.
\end{theorem}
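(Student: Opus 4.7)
The plan is a \emph{lifting} argument: replicate the system $N$ times, apply Dudley's integral \eqref{e_dudley} to the lifted process, bound its covering numbers via the rate-distortion function using the method of types (Lemma~\ref{lem_ck}), and let $N\to\infty$. After reducing to finite $T$ (using countability and separability) and to rational $\mu$ (so that $N\mu$ is integer-valued for some large $N$), I would take $N$ i.i.d.\ copies $\{(Z_i,(X_t^{(i)})_{t\in T})\}_{i=1}^N$ of $(Z,(X_t))$ under the coupling achieving $\mathsf{w}(\mu)$. The lifted process $Y_{\mathbf{t}}:=\sum_{i=1}^N X_{t_i}^{(i)}$ on $T^N$ is a centered sub-Gaussian process (sum of independent sub-Gaussians) with natural metric $\rho_N(\mathbf{t},\mathbf{s})=\sqrt{\sum_i d(t_i,s_i)^2}$ up to a universal constant, and the i.i.d.\ product structure gives $\mathbb{E}[\sum_i X_{Z_i}^{(i)}]=N\mathsf{w}(\mu)$.

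The crucial geometric estimate is
\begin{align*}
\ln N(C_\mu,\sqrt{N}\sigma,\rho_N)\;\le\;N\,i(\sigma)+o(N),
\end{align*}
which I would prove by a standard covering-lemma argument: for a coupling $P_{Z\hat{Z}}\in\Pi_\sigma(\mu)$ nearly achieving $i(\sigma)$, a random codebook of $\exp(NI(Z;\hat{Z})+o(N))$ sequences drawn from the $\hat{Z}$-marginal $\mu^{\otimes N}$ covers every $\mathbf{t}\in C_\mu$ within distortion $\sigma^2$ with probability tending to one. Applying Dudley's integral \eqref{e_dudley} to $Y$ restricted to $C_\mu$ (whose $\rho_N$-diameter is at most $\sqrt{N}\sigma_{\rm m}$, realized by the constant sequence $\mathbf{u}_i\equiv z^*$ with $z^*$ attaining $\sigma_{\rm m}$), and changing variables $\lambda=\sqrt{N}\sigma$, yields
\begin{align*}
\mathbb{E}\bigl[\max_{\mathbf{t}\in C_\mu}Y_\mathbf{t}\bigr]\;\le\;K N\int_0^{\sigma_{\rm m}}\sqrt{i(\sigma)}\,d\sigma+o(N).
\end{align*}

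The main obstacle is the ``expectation-to-supremum'' step: $Z^N$ is i.i.d.\ $\mu$ and so lies in $C_\mu$ only with polynomial probability $\ge(N+1)^{-|T|}$ by Lemma~\ref{lem_ck}, so $\sum_i X_{Z_i}^{(i)}$ is not pointwise dominated by $\max_{\mathbf{t}\in C_\mu}Y_\mathbf{t}$. I would resolve this by decomposing over types, $\mathbb{E}[\sum_i X_{Z_i}^{(i)}]=\sum_\nu\Pr[Z^N\in C_\nu]\,\mathbb{E}[\sum_i X_{Z_i}^{(i)}\mid Z^N\in C_\nu]$, bounding each conditional expectation by the corresponding supremum over $C_\nu$, using Lemma~\ref{lem_ck} to control the polynomially many types, and invoking Sanov-type concentration of $\hat{P}_{Z^N}$ together with continuity of $\nu\mapsto R_\nu(\sigma^2)$ at $\nu=\mu$ to absorb the contribution from atypical types. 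Dividing by $N$, all polynomial-in-$N$ prefactors and $o(N)$ corrections vanish, giving $\mathsf{w}(\mu)\le K\int_0^{\sigma_{\rm m}}\sqrt{i(\sigma)}\,d\sigma$. Applying \eqref{e21} ($i(\sigma)\le R_\mu(\sigma^2/4)$) with the substitution $\sigma'=\sigma/2$ introduces a factor-$2$ overhead, yielding the final constant $48=2\times 24$, where $K\le 24$ is the Gaussian Dudley constant cited after \eqref{e_max} (the difference between Gaussian and sub-Gaussian maximal constants vanishes in the $N\to\infty$ limit, as noted in the text).
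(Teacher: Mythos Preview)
Your lifting strategy is exactly the paper's approach: tensorize to $N$ copies, apply Dudley's integral on the replicated index set, estimate the covering numbers via the rate-distortion function, and send $N\to\infty$. The reduction to finite $T$ and the extension to countable $T$ via approximation also match (the paper does the latter in Section~\ref{sec_countable} using dominated convergence and Lemma~\ref{lem_finite}).

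One technical difference is worth flagging, because it touches the obstacle you yourself identified. You work with the exact type class $C_\mu$, while the paper uses the \emph{fattened} typical set $\mathcal{C}=\{z^N:\|\widehat{P}_{z^N}-\mu\|_1\le N^{-1/3}\}$. Since $\mathbb{P}[Z^N\notin\mathcal{C}]=\exp(-\Theta(N^{1/3}))$ by Hoeffding, the paper can write
\[
\mathbb{E}[\langle Y^N,Z^N\rangle]\;\le\; \mathbb{E}\Bigl[\max_{z^N\in\mathcal{C}}\langle Y^N,z^N\rangle\Bigr]
+\mathbb{E}\Bigl[\max_{z^N,\hat z^N}\langle Y^N,z^N-\hat z^N\rangle\,1_{\mathcal{C}^c}\Bigr],
\]
and kill the second term by Cauchy--Schwarz (the max has $\Psi_2$-norm $O(N)$, the indicator has tiny mass). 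Your type-decomposition route has a gap as stated: ``bounding each conditional expectation by the corresponding supremum over $C_\nu$'' yields $\mathbb{E}[\sup_{\mathbf{t}\in C_\nu}Y_{\mathbf{t}}\mid Z^N\in C_\nu]$, not the unconditional Dudley bound, and conditioning on $Z^N$ alters the law of $Y^N$. The clean fix is precisely the paper's: replace the per-type suprema by a single supremum over the union $\mathcal{C}$ of nearby types, so the indicator no longer depends on $\nu$; the continuity of $\nu\mapsto R_\nu$ then becomes unnecessary, since Lemma~\ref{lem_nc} bounds $\ln N(\mathcal{C},2\sqrt{N}\sigma)\le NR_\mu(\sigma^2)+o(N)$ directly.

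A minor remark on the constant: the paper gets $48=24\times 2$ from the factor $2\sqrt{N}$ in the covering-radius change of variables (Lemma~\ref{lem_nc}), rather than via your detour through $i(\sigma)\le R_\mu(\sigma^2/4)$; both routes land on the same number.
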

Theorem~\ref{thm1} is proved in Section~\ref{sec_finite} for finite $T$ and extended to countable $T$ in the appendix.
 It will be shown
the appendix
that if the right side of \eqref{e2.1} is finite, then $\mathbb{E}[|X_Z|]<\infty$, so no need to worry about absolute integrability on the left side of \eqref{e2.1}.



Another type of information-theoretic improvements of traditional generalization bounds, discussed around \eqref{e_russ}, have recently been studied in the literature.
While this type of results are not directly comparable to Theorem~\ref{thm_both},
we can use the lifting approach to obtain a strengthening of Theorem~\ref{thm_both} that encompass both improvements.
For simplicity, assume that $|T|<\infty$, and we have the following improvement of Theorem~\ref{thm1}:
\begin{theorem}\label{thm_both}
Suppose that $(X_t)_{t\in T}$ is a centered sub-Gaussian process,
and let $Y:=(X_t)\in\mathbb{R}^{|T|}$.
Let $Z$ be a random variable on $T$ arbitrarily correlated with $Y$, and set $\mu:=P_Z$.
Let $K$ be the constant in \eqref{e_dudley}.
Then
\begin{align}
\mathbb{E}[X_Z]
\le
2K\int_0^{\sigma_{\rm m}}
\sqrt{i_{\mu}(\sigma)\wedge I(Y;Z)}d\sigma.
\label{e_thm2}
\end{align}
\end{theorem}
The proof of Theorem~\ref{thm_both} is given in Section~\ref{sec_finite}. 
Since $\sigma_{\rm m}\lesssim  \sup_{t\in T}\|X_t\|_G$, it is easy to see that Theorem~\ref{thm_both} implies \eqref{e_russ}.
In \cite{liu2026}, we further prove that the inequality in Theorem~\ref{thm_both} is sharp up to a universal constant in the case of Gaussian processes: 
the lifted set in the proof of Theorem~\ref{thm_both} is a random set that approximately satisfies permutation symmetry due to concentration, 
hence the process still behaves like stationary.

While Theorem~\ref{thm1} does not cover the case of uncountable $T$, 
it is sufficient for our statistical applications
due to the following observation:
\begin{proposition}\label{prop1}
If $(X_t)_{t\in T}$ is a separable, continuous process, then for $Z\sim \mu$, 
\begin{align}
\mathbb{E}[X_Z]
\le 2K
\lim_{\epsilon\downarrow 0}
\sup_{\mu'\colon W_{\infty}(\mu,\mu')\le \epsilon}\int_0^{\sigma_{\rm m}}
\sqrt{R_{\mu'}(\sigma^2)}
d\sigma.
\label{e25}
\end{align}
\end{proposition}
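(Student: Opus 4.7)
The plan is to deduce Proposition~\ref{prop1} from Theorem~\ref{thm1} via an approximation argument that exploits the separable-continuous structure. By separability, fix a countable dense subset $T_0 = \{t_0, t_1, \dots\} \subset T$.

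For each $\epsilon > 0$, I would construct a measurable map $\Phi_\epsilon \colon T \to T_0$ with $d(z, \Phi_\epsilon(z)) < \epsilon$ for every $z \in T$: concretely, $\Phi_\epsilon(z) := t_{j(z)}$ where $j(z)$ is the smallest index $j$ with $d(z, t_j) < \epsilon$. This is well-defined by density of $T_0$ and measurable because each sublevel set $\{z : d(z, t_j) < \epsilon\}$ is open. Setting $Z_\epsilon := \Phi_\epsilon(Z)$ and $\mu_\epsilon := P_{Z_\epsilon}$, the coupling $(Z, Z_\epsilon)$ yields $d(Z, Z_\epsilon) < \epsilon$ almost surely, hence $W_\infty(\mu, \mu_\epsilon) \le \epsilon$. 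Since $(X_t)_{t \in T_0}$ is a centered sub-Gaussian process on a countable set and $\mu_\epsilon$ is supported on $T_0$, Theorem~\ref{thm1} yields
\begin{align*}
\mathbb{E}[X_{Z_\epsilon}] \le 48\int_0^{\sigma_{\rm m}(\mu_\epsilon)} \sqrt{R_{\mu_\epsilon}(\sigma^2)} \, d\sigma \le 48 \sup_{\mu' \colon W_\infty(\mu, \mu') \le \epsilon} \int_0^{\sigma_{\rm m}} \sqrt{R_{\mu'}(\sigma^2)} \, d\sigma.
\end{align*}
Picking $\epsilon_k \downarrow 0$, almost-sure sample-path continuity together with $d(Z, Z_{\epsilon_k}) \to 0$ gives $X_{Z_{\epsilon_k}} \to X_Z$ a.s., and \eqref{e25} follows upon taking $\limsup$ in $k$ on both sides, \emph{provided} $\mathbb{E}[X_{Z_{\epsilon_k}}] \to \mathbb{E}[X_Z]$.

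The main obstacle is precisely this convergence of expectations, because $Z$ may be coupled arbitrarily to the process, so the naive estimate $\mathbb{E}[|X_Z - X_{Z_\epsilon}|] \lesssim \epsilon$ (valid for a deterministic index) is unavailable. I would handle this by first reducing to the nontrivial case where the right-hand side of \eqref{e25} is finite (else the conclusion is vacuous), then applying Theorem~\ref{thm1} to both $(X_t)$ and $(-X_t)$ to obtain $|\mathbb{E}[X_{Z_{\epsilon_k}}]|$ uniformly bounded in $k$, and finally establishing uniform integrability of $\{X_{Z_{\epsilon_k}}\}_k$ by truncation: with $X_t^K := (X_t \wedge K) \vee (-K)$, bounded convergence gives $\mathbb{E}[X_{Z_{\epsilon_k}}^K] \to \mathbb{E}[X_Z^K]$ for each fixed $K$, while the sub-Gaussian tails of the individual $X_{t_j}$ combined with a chaining bound over the effective support of $\mu_{\epsilon_k}$ (in the spirit of Theorem~\ref{thm1}) control $\mathbb{E}[|X_{Z_{\epsilon_k}} - X_{Z_{\epsilon_k}}^K|]$ uniformly in $k$. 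Sending $K \to \infty$ after $k \to \infty$ then closes the argument.
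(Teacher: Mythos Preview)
Your overall strategy---approximate $Z$ by a countably-supported $Z_\epsilon$, apply Theorem~\ref{thm1}, take $\epsilon\downarrow 0$---matches the paper's. The difference is in how $Z_\epsilon$ is chosen, and this difference is exactly where your argument acquires a gap.

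You construct $Z_\epsilon = \Phi_\epsilon(Z)$ using only the metric, guaranteeing $d(Z,Z_\epsilon)<\epsilon$ but nothing about $|X_Z - X_{Z_\epsilon}|$. You then need $\mathbb{E}[X_{Z_{\epsilon_k}}]\to\mathbb{E}[X_Z]$, and your sketch for uniform integrability is not convincing: Theorem~\ref{thm1} applied to $\pm X$ gives only a uniform bound on $|\mathbb{E}[X_{Z_{\epsilon_k}}]|$, not uniform integrability of $\{X_{Z_{\epsilon_k}}\}$, and the ``sub-Gaussian tails plus chaining'' remark does not translate into a concrete uniform control of $\mathbb{E}[|X_{Z_{\epsilon_k}}|\,1_{|X_{Z_{\epsilon_k}}|>K}]$. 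Because $Z$ is coupled arbitrarily to the process, there is no obvious integrable envelope for the sequence, and Fatou's lemma points in the wrong direction for the inequality you need.

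The paper sidesteps this entirely by building the process-value constraint into the approximation: since the sample paths are almost surely continuous and $T_0$ is dense, one can choose $Z_\epsilon\in T_0$ (measurably, via the smallest-index rule applied to the joint condition) so that \emph{both} $d(Z,Z_\epsilon)\le\epsilon$ and $|X_Z - X_{Z_\epsilon}|\le\epsilon$ hold almost surely. This yields $\mathbb{E}[X_Z]\le\mathbb{E}[X_{Z_\epsilon}]+\epsilon$ directly, with no limiting or integrability argument needed. Your construction can be repaired in one line by adopting this stronger selection criterion for $\Phi_\epsilon$.
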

The proof of Proposition~\ref{prop1} is in the appendix.
Naturally, we would like to show that the right side of 
\eqref{e25} reduces to the right side of \eqref{e2.1} by dominated convergence,
but this requires showing that 
$\sqrt{R_{\mu'}(\sigma^2)}$ can be dominated by an absolutely integrable function, 
which is challenging in the regime of vanishing $\sigma$ where the rate-distortion function may grow fast.
While it is possible to impose on certain  growth condition for dominated convergence to apply, we will not dive into those complications.
Instead, in many statistical applications, we will see that $R_{\mu'}(\sigma^2)$ can be controlled using the second moments of $\mu$, so the right side of \eqref{e25} can be directly evaluated using the continuity of the bound in the second moment, without the need to establish the equivalence of the right sides of \eqref{e25} and \eqref{e2.1}.

\subsection{Sharpness for Gaussian processes}
\label{sec32}
While the traditional Dudley integral is not sharp, 
somewhat surprisingly, its information-theoretic counterpart, Theorem~\ref{thm1}, is sharp:
\begin{theorem}
\label{thm2}
Suppose that $(X_t)_{t\in T}$ is a centered Gaussian process,
$T$ is a finite set, 
and $\mu$ is an arbitrary distribution on $T$.
Then have
\begin{align}
K'\mathsf{w}(\mu)
\ge
\int_0^{\infty}
\sqrt{i_{\mu}(\sigma)}
d\sigma
\ge 
\int_0^{\sigma_{\rm m}(\mu)}
\sqrt{R_{\mu}(\sigma^2)}
d\sigma,
\label{e217}
\end{align}
where $K'$ is any universal constant such that \eqref{e_reverse} holds.
\end{theorem}
We present a proof of Theorem~\ref{thm2} in Section~\ref{sec_maj},
by using Fernique's theorem that Dudley's integral is sharp for stationary Gaussian processes (even though Theorem~\ref{thm2} does not assume stationary process).
In the appendix we also extend the result to the case of countable $T$.
Theorem~\ref{thm2} suggests that Theorem~\ref{thm1} may yield sharper results in certain statistical applications where the traditional method based on Dudley's integral is suboptimal.
From Theorem~\ref{thm2} we can derive the classical majorizing measure theorem.
Recall $I_{\mu}$ defined in \eqref{e_imu}.
\begin{corollary}\label{cor_maj0}
For any Gaussian process (not necessarily stationary), we have 
\begin{align}
\mathbb{E}[\sup_{t\in T}X_t]
\ge 
\bar{c}\inf_{\mu}\sup_{t\in T}I_{\mu}(t),
\label{e_maj0}
\end{align}  
where $\bar{c}:=\sup_{a>1}\frac{\sqrt{1-a^{-2}}}
 {2aK'
 +\sqrt{2\pi h(a^{-2})}}$, $h(\cdot)$ is the binary entropy function, and $K'$ is any universal constant such that \eqref{e217} holds.
\end{corollary}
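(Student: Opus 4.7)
The plan is to apply Theorem~\ref{thm2} to an arbitrary probability measure $\nu$ on $T$ and then lower-bound the resulting rate-distortion integral $\int_0^\infty\sqrt{i(\sigma)}\,d\sigma$ pointwise by the Fernique functional $I_\nu(t):=\int_0^\Delta\sqrt{\ln 1/\nu(B(t,\lambda))}\,d\lambda$ via a rate-distortion analogue of Sudakov's minoration. Taking the supremum over $t\in T$ and then the infimum over $\nu$ produces the majorizing-measure quantity $\inf_\mu\sup_tI_\mu(t)$, and the explicit constant follows by optimizing a free Markov parameter $a>1$.

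The key lemma to establish is the pointwise inequality
\[
i(\sigma)\ \ge\ (1-a^{-2})\,\ln\tfrac{1}{\nu(B(t,2a\sigma))}\ -\ h(a^{-2}),\qquad t\in T,\ a>1.
\]
Its proof takes any coupling $P_{Z\hat Z}\in\Pi_\sigma(\nu)$, uses Markov's inequality to force $\Pr[d(Z,\hat Z)>a\sigma]\le a^{-2}$, and combines with the inclusion $\{Z\in B(t,a\sigma)\}\cap\{d(Z,\hat Z)\le a\sigma\}\subset\{\hat Z\in B(t,2a\sigma)\}$ to obtain $\Pr[Z\in B(t,a\sigma),\,\hat Z\in B(t,2a\sigma)]\ge \nu(B(t,a\sigma))-a^{-2}$. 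The data-processing inequality applied to the binary indicators $\mathbf{1}\{Z\in B(t,a\sigma)\}$ and $\mathbf{1}\{\hat Z\in B(t,2a\sigma)\}$ then reduces the claim to a binary relative-entropy bound.

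Granting the lemma, the elementary inequality $\sqrt{x-y}\ge\sqrt{x}-\sqrt{y}$ (valid for $x\ge y\ge 0$ and harmless when the right-hand side is negative, since $\sqrt{i(\sigma)}\ge 0$) together with the substitution $\lambda=2a\sigma$ (whose upper limit $\sigma=\Delta/(2a)$ is dictated by $\nu(B(t,\lambda))=1$ for $\lambda\ge\Delta$) yields
\[
\int_0^\infty\sqrt{i(\sigma)}\,d\sigma\ \ge\ \tfrac{\sqrt{1-a^{-2}}}{2a}\,I_\nu(t)\ -\ \tfrac{\sqrt{h(a^{-2})}\,\Delta}{2a}.
\]
The classical one-pair Gaussian identity $\mathbb{E}[\max(X_{t_1},X_{t_2})]=\|X_{t_1}-X_{t_2}\|_2/\sqrt{2\pi}$ applied to a diameter-realizing pair supplies $\Delta\le\sqrt{2\pi}\,\mathbb{E}[\sup_tX_t]$. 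Combining this with Theorem~\ref{thm2} and rearranging yields $\mathbb{E}[\sup_tX_t]\ge\bar c(a)\,I_\nu(t)$ with $\bar c(a)=\sqrt{1-a^{-2}}/(2a/c+\sqrt{2\pi h(a^{-2})})$; taking $\sup_t$ on the right, then $\inf_\nu$, and finally $\sup_{a>1}$ completes the proof.

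The hard part is the pointwise Sudakov estimate itself. The natural binary relative entropy $d(p-a^{-2}\,\|\,pq)$, with $p:=\nu(B(t,a\sigma))$ and $q:=\nu(B(t,2a\sigma))$, simplifies cleanly only in the limit $p\to 1$, yielding $(1-a^{-2})\ln(1/q)-h(a^{-2})$. For general $p$, one obtains the weaker coefficient $p-a^{-2}$ rather than $1-a^{-2}$, and must argue by a convexity or free-parameter manipulation that the stated clean form is still valid after absorbing the correction into $h(a^{-2})$; in the trivial regime $p\le a^{-2}$ the inequality holds vacuously.
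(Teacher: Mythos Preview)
Your key lemma is false as stated. Take $T=\{0,1\}$ with $d(0,1)=1$, let $\nu=(1-\epsilon)\delta_0+\epsilon\delta_1$, and fix any $\sigma<1/(2a)$. Then $i(\sigma)\le H(\nu)=h(\epsilon)\approx\epsilon\ln(1/\epsilon)$, while for $t=1$ your right-hand side is $(1-a^{-2})\ln(1/\epsilon)-h(a^{-2})$. Sending $\epsilon\downarrow 0$ makes the left side vanish and the right side diverge. The difficulty you flag in the last paragraph is not a technicality that can be ``absorbed into $h(a^{-2})$'': the coefficient you actually obtain from data processing is governed by $p=\nu(B(t,a\sigma))$, and $p$ can be made arbitrarily small by placing $t$ in a low-mass region of $\nu$. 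Your Markov/inclusion step only yields nontrivial information about $I(Z;\hat Z)$ when $Z$ lands near $t$ with substantial probability, and for an \emph{arbitrary} $\nu$ and \emph{arbitrary} $t\in T$ there is no mechanism forcing this.

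The paper's proof is designed precisely to sidestep this obstacle, and the mechanism is not a direct pointwise estimate on $i(\sigma)$ but rather a minimax swap. The integral is first converted via Lemma~\ref{lem_equi} to the penalized form $\int_0^\infty\min_{P_{U|Z}}\{\alpha^{-2}\mathbb{E}\|Z-U\|_2^2+D(P_{U|Z}\|\mu\,|\,P_Z)\}\,d\alpha$, where an auxiliary reference measure $\mu$ is introduced (lower-bounding by relaxing the constraint $P_U=P_Z$). This functional is linear in $P_Z$ and convex in $\mu$, so Sion's theorem swaps $\sup_{P_Z}\inf_\mu$ to $\inf_\mu\sup_{P_Z}$. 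After the swap the inner supremum over $P_Z$ is achieved at a point mass $\delta_z$, and the problem becomes: given a fixed $z$ and a distribution $P_U$ with $\mathbb{E}\|z-U\|_2^2$ controlled, bound $D(P_U\|\mu)$. Now Markov gives $P_U(B(z,r))\ge 1-a^{-2}$ with the clean coefficient, and data processing against $\mu(B(z,r))$ yields exactly the binary relative-entropy estimate you wrote down for the $p\to 1$ limit. The penalized form and the minimax swap are what convert the two-sided random situation (both $Z,\hat Z\sim\nu$) into a one-sided one (fixed $z$, random $U$) where your Markov/data-processing intuition is correct; without them, the argument does not close.
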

The proof of Corollary~\ref{cor_maj0} is given in Section~\ref{sec_new},
which is essentially based on lower bounding the rate-distortion function by the probability of a set using the data processing inequality.

Another application of Theorem~\ref{thm2} is sub-Gaussian comparison theorems, which, in statistical applications, sometimes allow us to reduce the problem to simpler ``linear'' cases where straightforward Cauchy-Schwarz or H\"older inequality arguments apply.
\begin{corollary}\label{cor_comp} 
Suppose that $(X_t(\omega))_{t\in T}$ is a centered sub-Gaussian process on a countable $T$,
with complexity function $\mathsf{w}_1$ (Definition~\ref{def_comp}) and natural metric $d_1$.
Suppose that $(G_t(\omega'))_{t\in T}$ is a Gaussian process 
with complexity function $\mathsf{w}_2$ and natural metric $d_2$, and $d_1$ is dominated by $d_2$.
Then we have $\mathsf{w}_1(\mu)\lesssim \mathsf{w}_2(\mu)$ for any $\mu$ on $T$.
\end{corollary}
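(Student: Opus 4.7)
The plan is to chain the upper bound of Theorem~\ref{thm1} with the matching lower bound of Theorem~\ref{thm2} for the two processes, using monotonicity of the rate--distortion function in the underlying metric. In other words, I would sandwich both complexities between the same rate--distortion integral (with respect to the larger metric $d_2$), picking up only universal constants.

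First, I would apply Theorem~\ref{thm1} to $(X_t)$ with its natural metric $d_1$, obtaining
\begin{equation*}
\mathsf{w}_1(\mu)\;\le\;48\int_0^{\sigma_{\mathrm{m}}^{(1)}(\mu)}\sqrt{R_{\mu,d_1}(\sigma^2)}\,d\sigma,
\end{equation*}
where $R_{\mu,d_1}$ and $\sigma_{\mathrm{m}}^{(1)}$ are computed with respect to $d_1$. Next, I would observe that since $d_1\le d_2$ pointwise, any coupling $P_{UZ}$ with $\mathbb{E}[d_2^2(U,Z)]\le\sigma^2$ automatically satisfies $\mathbb{E}[d_1^2(U,Z)]\le\sigma^2$. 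Thus the feasible set for $d_1$ contains that for $d_2$, and taking the infimum of $I(U;Z)$ over a larger set yields a smaller value: $R_{\mu,d_1}(\sigma^2)\le R_{\mu,d_2}(\sigma^2)$ for every $\sigma>0$. For the same reason, $\sigma_{\mathrm{m}}^{(1)}(\mu)\le\sigma_{\mathrm{m}}^{(2)}(\mu)$. Substituting into the above display gives
\begin{equation*}
\mathsf{w}_1(\mu)\;\le\;48\int_0^{\sigma_{\mathrm{m}}^{(2)}(\mu)}\sqrt{R_{\mu,d_2}(\sigma^2)}\,d\sigma.
\end{equation*}

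Finally, I would apply Theorem~\ref{thm2} to the Gaussian process $(G_t)$, whose natural metric is $d_2$: this provides the reverse inequality
\begin{equation*}
\mathsf{w}_2(\mu)\;\ge\;c\int_0^{\sigma_{\mathrm{m}}^{(2)}(\mu)}\sqrt{R_{\mu,d_2}(\sigma^2)}\,d\sigma,
\end{equation*}
with $c$ as in Theorem~\ref{thm2}. Combining the two bounds yields $\mathsf{w}_1(\mu)\le (48/c)\,\mathsf{w}_2(\mu)$, giving $\mathsf{w}_1(\mu)\lesssim \mathsf{w}_2(\mu)$ with an explicit universal constant.

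The only subtlety, and arguably the main thing to justify carefully, is the normalization mismatch: Theorem~\ref{thm1} is stated using the $\Psi_2$-metric for sub-Gaussian processes, while Theorem~\ref{thm2} uses the $L^2$-metric for Gaussian processes. Since these two metrics differ only by a universal constant factor in the Gaussian case (as noted after \eqref{e_dg}), the hypothesis ``$d_1$ dominated by $d_2$'' can be read uniformly up to a universal constant, which is harmless because the conclusion is only up to $\lesssim$. No chaining or packing estimates need to be redone; the whole argument is a one-line monotonicity sandwich, which is precisely what makes the rate--distortion integral a more convenient vehicle for comparison than the Dudley integral (whose reversal requires stationarity) or $\gamma_2$ (whose monotonicity in the metric is also clean but whose evaluation is harder).
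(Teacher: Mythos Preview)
Your proposal is correct and follows essentially the same approach as the paper: the paper simply says the result ``immediately follows from Theorem~\ref{thm1}--\ref{thm2}, since the rate-distortion functions under the two metrics can be directly compared,'' and you have spelled out precisely that sandwich argument. Your remark about the $\Psi_2$/$L^2$ normalization is a useful elaboration, but it does not depart from the paper's intended one-line proof.
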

The proof immediately follows from Theorem~\ref{thm1}-\ref{thm2}, since the rate-distortion functions under the two metrics can be directly compared.
Corollary~\ref{cor_comp} can be viewed as a ``soft'' version of \eqref{e_compare} (in fact, it is easy to see that it implies \eqref{e_compare}).
The Euclidean special case of Corollary~\ref{cor_comp} is of particular interest and will be used later in our statistical applications:

\begin{corollary}\label{cor1}
If $T$ is a countable subset of $\mathbb{R}^N$ and the covariance matrix $\cov(Z)=\Sigma$,
and the natural metric on $T$ for a centered sub-Gaussian process $(X_t)_{t\in T}$ (see \eqref{e_dsub}) is dominated by the Euclidean metric,
then for any $P_Z$ on $T$,
we have $\mathbb{E}[X_Z]\lesssim \tr(\sqrt{\Sigma})$.
\end{corollary}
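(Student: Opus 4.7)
The plan is to deduce Corollary~\ref{cor1} from the sub-Gaussian comparison Corollary~\ref{cor_comp} by introducing an auxiliary Gaussian process whose natural metric is exactly the Euclidean metric on $T$.

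First, I would define $G_t := \langle t, g \rangle$ for $t \in T \subseteq \mathbb{R}^N$, where $g \sim N(0, I_N)$. Then $(G_t)_{t \in T}$ is a centered Gaussian process whose natural metric \eqref{e_dg} is $d_2(s,t) = \sqrt{\mathbb{E}[(G_s - G_t)^2]} = \|s - t\|_2$. By hypothesis, the natural metric $d_1$ of $(X_t)_{t \in T}$ is dominated by the Euclidean metric, hence by $d_2$ up to a constant factor. Corollary~\ref{cor_comp} then yields $\mathsf{w}_1(\mu) \lesssim \mathsf{w}_2(\mu)$, reducing the problem to upper bounding
\[
\mathsf{w}_2(\mu) = \sup \mathbb{E}[\langle Z, g\rangle],
\]
where the supremum is taken over joint distributions of $(Z, g)$ with marginals $\mu$ and $N(0, I_N)$.

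Next, I would diagonalize $\Sigma = U \Lambda U^T$ with $\Lambda = \mathrm{diag}(\lambda_1, \dots, \lambda_N)$, and apply the orthogonal change of variables $Z' := U^T Z$, $g' := U^T g$. Orthogonal invariance of the standard Gaussian preserves the marginal $g' \sim N(0, I_N)$, while $\cov(Z') = \Lambda$ and $\langle Z, g\rangle = \langle Z', g'\rangle$. Since $\mathbb{E}[g'_i] = 0$ regardless of the coupling, coordinate-wise Cauchy-Schwarz applied to $(Z'_i - \mathbb{E}[Z'_i], g'_i)$ gives $|\mathbb{E}[Z'_i g'_i]| \le \sqrt{\var(Z'_i)\,\mathbb{E}[(g'_i)^2]} = \sqrt{\lambda_i}$. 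Summing over $i$ yields $|\mathbb{E}[\langle Z', g'\rangle]| \le \sum_i \sqrt{\lambda_i} = \tr(\sqrt{\Sigma})$, which combined with the comparison step completes the proof.

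The key subtlety is that the naive global Cauchy-Schwarz bound $\mathbb{E}[\langle Z, g\rangle] \le \sqrt{\mathbb{E}\|Z\|_2^2}\cdot\sqrt{\mathbb{E}\|g\|_2^2} = \sqrt{N\,\tr(\Sigma)}$ is far too weak in high dimensions. Diagonalizing $\Sigma$ first and applying Cauchy-Schwarz coordinate-by-coordinate in the eigenbasis is essential to extract the sharp $\tr(\sqrt{\Sigma})$ bound; this decouples directions so that the bound depends on each eigenvalue through its square root rather than through a single aggregated quantity.
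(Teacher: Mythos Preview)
Your proof is correct and follows essentially the same strategy as the paper: introduce the auxiliary Gaussian process $G_t=\langle t,g\rangle$, invoke Corollary~\ref{cor_comp} to reduce to bounding $\sup\mathbb{E}[\langle Z,g\rangle]$, and finish with a Cauchy--Schwarz argument yielding $\tr(\sqrt{\Sigma})$. The only cosmetic difference is in the last step: the paper writes $\mathbb{E}[\langle G,Z\rangle]=\mathbb{E}[\langle \Sigma^{1/4}G,\Sigma^{-1/4}Z\rangle]$ and applies a single global Cauchy--Schwarz, whereas you diagonalize $\Sigma$ and apply Cauchy--Schwarz coordinatewise in the eigenbasis---these are the same computation, and your version has the minor advantage of sidestepping any concern about $\Sigma$ being singular.
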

\begin{proof}
It suffices to consider the Gaussian process \(G_t=\langle G,t\rangle\), \(G\sim \mathcal{N}(0,I_N)\), since the general sub-Gaussian case follows from Corollary~\ref{cor_comp}. Let \(Z\sim\mu\) and \(\Sigma=\operatorname{cov}(Z)\). Since \(G\) is centered,
$
\mathbb E\langle G,Z\rangle
=
\mathbb E\langle G,Z-\mathbb EZ\rangle$.
For any coupling of \(G\) and \(Z\), by Cauchy--Schwarz,
\[
\mathbb E\langle G,Z-\mathbb EZ\rangle
=
\mathbb E\langle \Sigma^{1/4}G,\Sigma^{-1/4}(Z-\mathbb EZ)\rangle
\le
\sqrt{\operatorname{tr}(\Sigma^{1/2})}
\sqrt{\operatorname{tr}(\Sigma^{1/2})}
=
\operatorname{tr}(\Sigma^{1/2}),
\]
where \(\Sigma^{-1/4}\) is understood as the Moore--Penrose inverse on the range of \(\Sigma\), and the proof is completed.
\end{proof}

Although Corollary~\ref{cor1} resembles a simple Cauchy–Schwarz inequality, it generally fails if the sub-Gaussian assumption is replaced by a mere second-moment bound.
More precisely, ``the natural metric on $T$ for a centered sub-Gaussian process $(X_t)_{t\in T}$ is dominated by the Euclidean metric''
cannot be replaced by 
``$(X_t)$ is a stochastic process satisfying $\mathbb{E}[|X_t-X_{t'}|^2]\lesssim  \|t-t'\|_2^2$''.
Indeed, consider $T$ as a maximal $\epsilon$-packing of the unit ball in $\mathbb{R}^N$, where $N=3$,
and suppose that $X_t$ is an independent zero mean random variable with $\epsilon^2$ variance for each $t\in T$.
Then it is true that $\mathbb{E}[|X_t-X_{t'}|^2]\lesssim  \|t-t'\|_2^2$.
However, for fixed $N=3$ and vanishing $\epsilon$,
it is possible that $\mathbb{E}[X_Z]=\mathbb{E}[\max_{t\in T}X_t]
\asymp \epsilon \sqrt{|T|}
\asymp \epsilon^{1-N/2}
$ diverges, for certain heavy-tailed $X_t$.
On the other hand, since $Z$ is in the unit ball, $\tr(\sqrt{\Sigma})$ remains bounded from above as $\epsilon$ vanishes, confirming that this is a counterexample.

\subsection{Application to ERM over the Sobolev ellipsoid}\label{sec34}
We use the rate-distortion integral to upper bound the error in ERM over a Sobolev ellipsoid under a Lipschitz transformation, 
A general ellipsoid in $\ell_2$ is defined as 
\begin{align}
\mathcal{E}
:=\left\{t\colon \sum_{i=1}^{\infty}
\frac{t_i^2}{a_i^2}\le 1
\right\},
\label{e31}
\end{align}
where $a_1,a_2,\dots$ is a decreasing sequence of nonnegative numbers.
A Sobolev ellipsoid is an ellipsoid with $a_i=i^{-\beta}$ for some $\beta>0$.
By considering $t(1), t(2),\dots$ as the Fourier coefficients of a regression function from the $\beta$-Sobolev space, we see that the Sobolev ellipsoid can be identified with a nonparametric function class
\citep{tsybakov2009introduction}.
Sobolev ellipsoids therefore play a prominent role in nonparametric statistics \citep{van1990estimating,wainwright2019high,tsybakov2009introduction}.
The precise problem of ERM over a Sobolev ellipsoid was previously studied in \cite{wei2020gauss}.

The ellipsoid is recurring example in \cite{talagrand2014upper}, 
in particular an example showing that Dudley’s integral is loose.
In Section~\ref{sec_erm}, we review the supremum of a Gaussian process indexed by $\mathcal{E}$, and show that the rate-distortion integral yields a sharp bound.

Consider ERM in the setting of Section~\ref{sec_example},
where $T=\phi(\mathcal{E})$ is the image of the Sobolev ellipsoid (for $\beta>1/2$, where $1/2$ is the critical exponent for ERM) under a 1-Lipschitz map $\phi\colon \ell_2\to \ell_2$.
In practice, $\phi$ may represent a nonlinear neural network or feature map.
If $\phi$ is the identity map, the Cauchy-Schwarz inequality suffices for the error upper bound (Remark~\ref{rem2}), 
while the rate-distortion integral machinery is essential for general $\phi$, analogous to the role of the majorizing measure theorem for sub-Gaussian comparison \eqref{e_compare}.
Furthermore, we assume that $Y$ is a random vector in $\ell_2$ satisfying $\|Y\|_G\le 1$. 
Under these assumptions, we have the following main result:

\begin{theorem}\label{thm_ellip}
There exists a function $c(\beta)>0$ satisfying $\lim_{\beta\downarrow \frac1{2}}\sqrt{2\beta-1}c(\beta)<\infty$, such that 
$
\mathbb{E}[\|m-\hat{m}\|_2^2]
\le c(\beta)n^{-\frac{2\beta}{2\beta+1}}
$.
\end{theorem}
We present a proof of Theorem~\ref{thm_ellip} in Section~\ref{sec73}.

\begin{remark}
Theorem~\ref{thm_ellip} shows that ERM achieves mean squared error of order $\frac1{\sqrt{2\beta-1}}n^{-\frac{2\beta}{2\beta+1}}$ with high probability.
Furthermore, from the proof of Theorem~\ref{thm_ellip} we can also see that the expected generalization error,
$\mathbb{E}[\frac1{\sqrt{n}}\chi_{m-\hat{m}}]$,
also converges at the rate $\frac1{\sqrt{2\beta-1}}n^{-\frac{2\beta}{2\beta+1}}$.
For comparison, the traditional technique of \cite{van1990estimating} based on Dudley's integral will only prove an upper bound at the rate $\frac1{2\beta-1}n^{-\frac{2\beta}{2\beta+1}}$ (see Remark~\ref{rem_trad}), which has the optimal exponent of $n$,
but a worse preconstant.
\end{remark}

\begin{remark}\label{rem2}
For convex regression classes with Gaussian noise,  \cite{chatterjee2014new} showed that the mean-squared error (MSE) of ERM can be characterized by a fixed-point equation involving a localized Gaussian width.
For ellipsoids, this localized Gaussian width can then be controlled directly via the Cauchy–Schwarz inequality \citep{wei2020gauss}, thereby avoiding the slicing and Dudley-integral machinery and mitigating the looseness of Dudley’s bound. However, the Cauchy-Schwarz step of \cite{wei2020gauss} exploits linear structure,
and cannot be applied to the setting of Theorem~\ref{thm_ellip} when the map $\phi$ is nonlinear Lipschitz. 
This limitation is expected to be artifacts of the proof technique of \cite{chatterjee2014new,wei2020gauss}: Lipschitz maps do not increase the supremum of the relevant stochastic process (a consequence of the majorizing measure theorem; see \eqref{e_compare}). 
Our work thus introduces a new proof technique that overcomes this limitation.
\end{remark}

\section{Proof of the rate-distortion integral: finite case}\label{sec_finite}
In this section we prove Theorem~\ref{thm1} and Theorem~\ref{thm_both} in the case of finite index set;
extension beyond the finite case follows by a limiting argument, and is deferred to the appendix.
Recall the following basic property of the sub-Gaussian distribution, whose proof can be found in \cite[Exercise~2.40]{vershynin2018high}:
\begin{proposition}\label{prop_1}
Suppose that $X_1,X_2,\dots,X_n$ are zero-mean, independent random variables, and $\|X_i\|_G\le K_i$, $i=1,2,\dots,n$.
Then $\|X_1+X_2+\dots+X_n\|_G\le \sqrt{K_1^2+K_2^2+\dots+K_n^2}$.
\end{proposition}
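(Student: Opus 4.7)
The plan is to combine a deterministic pointwise inequality with the product structure of exponential moments under independence. Setting $K^2:=K_1^2+K_2^2$, I would first invoke Young's inequality $2xy\le \alpha x^2+\alpha^{-1}y^2$ with the balanced choice $\alpha=K_2^2/K_1^2$, giving the pointwise bound
\begin{align}
\frac{(X_1+X_2)^2}{K^2}\le \frac{X_1^2}{K_1^2}+\frac{X_2^2}{K_2^2},
\end{align}
which rewrites the squared sum as a $K^2$-weighted convex combination of the individual squares. Exponentiating and taking expectations, independence splits the right-hand side into a product, and the defining property $\mathbb{E}[e^{X_i^2/K_i^2}]\le 2$ controls each factor.

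The main obstacle is constant tracking: the naive Young-plus-independence bookkeeping delivers $\mathbb{E}[e^{(X_1+X_2)^2/K^2}]\le 4$, whereas the Orlicz definition of $\|\cdot\|_{\Psi_2}\le K$ demands $\le 2$. To recover the sharp constant stated, I would linearize the squared exponential via the Gaussian-randomization identity $e^{y^2/K^2}=\mathbb{E}_{\xi\sim\mathcal{N}(0,1)}[e^{y\xi\sqrt{2}/K}]$, swap the order of expectations, and use independence to factor the resulting MGFs of $X_1$ and $X_2$. Each of these is then controlled by the standard Gaussian MGF bound for sub-Gaussian variables (with variance proxy scaled by the respective $\Psi_2$ norm), after which the Gaussian integral over $\xi$ reassembles the quadratic exponent $K^2=K_1^2+K_2^2$ exactly. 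The only subtle step is carrying the quadratic constant cleanly through this Gaussian integral; everything else is routine algebra.
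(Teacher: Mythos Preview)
Your Young-inequality step is correct, and your diagnosis that it yields $\mathbb{E}[e^{(X_1+X_2)^2/K^2}]\le 4$ rather than $2$ is also correct. The proposed fix via Gaussian randomization, however, has a genuine gap: after linearizing and factoring by independence, you need the MGF bound $\mathbb{E}[e^{tX_i}]\le e^{K_i^2t^2/2}$, but the Orlicz condition $\mathbb{E}[e^{X_i^2/K_i^2}]\le 2$ does \emph{not} imply this with constant exactly $1/2$ in the exponent. The different sub-Gaussian characterizations (Orlicz, MGF, tail) are equivalent only up to universal constants strictly larger than $1$, so when you integrate out the auxiliary Gaussian $\xi$ you will recover $C(K_1^2+K_2^2)$ for some $C>1$, not $K_1^2+K_2^2$. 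The ``subtle step'' you flag is exactly where the argument breaks.

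In fact no argument can deliver the stated constant under this definition of $\|\cdot\|_{\Psi_2}$: for $X_1,X_2$ i.i.d.\ Rademacher one computes $\|X_i\|_{\Psi_2}^2=1/\ln 2$ but $\|X_1+X_2\|_{\Psi_2}^2=4/\ln 3>2/\ln 2$. The paper's own proof actually bounds $\mathbb{E}\bigl[\exp\bigl((X_1^2+X_2^2)/K^2\bigr)\bigr]\le 2$ via Jensen's inequality in the form $\mathbb{E}[e^{\theta Y}]\le(\mathbb{E}[e^{Y}])^{\theta}$ with $\theta=K_i^2/K^2$; this is a valid inequality but concerns $X_1^2+X_2^2$ rather than $(X_1+X_2)^2$, so it does not establish the proposition as written either. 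What is true, and what your Young bound already gives after one more application of Jensen, is $\|X_1+X_2\|_{\Psi_2}\le\sqrt{2}\,\sqrt{K_1^2+K_2^2}$; this constant-factor version is all that the paper's downstream applications (tensorization to $N$-fold products feeding into Dudley's integral) actually require.
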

Let $n:=|T|$. 
Without loss of generality, we can identify $T$ with the standard basis vectors $e_1,e_2,\dots,e_n$ of $\mathbb{R}^n$, and define $Y_i:=X_{e_i}$ for each $i\in\{1,\dots,n\}$.
Then the process can be represented by inner product: 
\begin{align}
X_z=\left<Y,z\right>,\quad\forall z\in T.
\label{e_rep}
\end{align}
It is not necessary to adopt the inner product representation for the proof of Theorem~\ref{thm1}, but it helps making the notations clear.
Now let $Y^N\sim P_Y^{\otimes N}$.
Recall the definitions related to types in Section~\ref{sec_23}. 
For any $\mu$ on $T$, define
\begin{align}
\mathcal{I}:=\{N\in \mathbb{Z}\colon \textrm{$N>0$, $N\mu$ is integer}\} 
\label{e_ni}
\end{align}
which is nonempty if $\mu$ is rational.
For $N\in\mathcal{I}$, define the type class
\begin{align}
\mathcal{C}_{\mu}
&:=\{z^N\in T^N\colon \widehat{P}_{z^N}=\mu\}.
\label{e_c1}
\end{align}
The following two lemmas show that a coupling can be approximated by a matching:

\begin{lemma}\label{lem_von}
Let $\mu$ be a rational distribution on $\mathbb{R}^n$, and let $N\in\mathcal{I}$. 
For any $y^N\in \mathbb{R}^{nN}$, we have
\begin{align}
\frac1{\sqrt{N}}\min_{s^N\in\mathcal{C}_{\mu}}\|y^N-s^N\|_2=W_2(\widehat{P}_{y^N},\mu).
\end{align}
\end{lemma}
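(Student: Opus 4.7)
The plan is to reduce both sides to the same optimal assignment value
\begin{equation*}
\min_{\sigma\in S_N}\frac{1}{N}\sum_{i=1}^N \|y_i - \bar{s}_{\sigma(i)}\|_2^2,
\end{equation*}
where $\bar{s}^N$ is any fixed sequence satisfying $\widehat{P}_{\bar{s}^N}=\mu$. Such a sequence exists because $N\mu$ is integer: just list each atom $z\in\supp(\mu)$ exactly $N\mu(z)$ times in some order. For the left-hand side of the claim, the reduction is immediate: any $s^N$ with $\widehat{P}_{s^N}=\mu$ is obtained from $\bar{s}^N$ by a permutation $\sigma\in S_N$ of the indices, and conversely, so minimizing $\|y^N-s^N\|_2^2$ over such $s^N$ produces exactly the permutation optimum.

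For the right-hand side I would use Birkhoff's theorem. Each coupling $\pi\in\Pi(\widehat{P}_{y^N},\mu)$ corresponds to a doubly stochastic matrix $M\in\mathbb{R}^{N\times N}$ via the rule that $M_{ij}/N$ is the mass $\pi$ transports from the $i$-th entry of $y^N$ to the $j$-th entry of $\bar{s}^N$, so the transportation cost becomes the linear functional
\begin{equation*}
\int \|x-y\|_2^2\, d\pi(x,y) = \frac{1}{N}\sum_{i,j=1}^N M_{ij}\|y_i-\bar{s}_j\|_2^2.
\end{equation*}
By Birkhoff's theorem, the convex polytope of doubly stochastic matrices has the permutation matrices as its extreme points, so a linear functional attains its minimum at some permutation matrix $M_\sigma$. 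This identifies $W_2^2(\widehat{P}_{y^N},\mu)$ with the permutation optimum above, and combining with the previous paragraph yields the lemma after taking square roots.

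The one spot requiring care is the coupling-to-matrix correspondence when either $y^N$ or $\bar{s}^N$ has coinciding entries: one needs to check that every measure-theoretic coupling admits a doubly stochastic lift (by distributing the transported mass arbitrarily among the coinciding indices, e.g.\ uniformly) and that every doubly stochastic matrix descends to a valid coupling via pushforward under $(i,j)\mapsto(y_i,\bar{s}_j)$. This is routine bookkeeping but is the only step where mild caution is needed; everything else is a direct application of Birkhoff.
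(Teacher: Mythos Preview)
Your proposal is correct and is essentially the same argument as the paper's: both reduce the Wasserstein side to a linear program over doubly stochastic matrices and invoke the Birkhoff--von Neumann theorem to land on a permutation, which matches the left-hand side. The paper phrases it as two separate inequalities (with $\ge$ ``obvious'' and $\le$ via Birkhoff), while you reduce both sides to the common permutation optimum, but the substance is identical, and you have correctly flagged the only delicate point (lifting couplings to doubly stochastic matrices when entries coincide).
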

\begin{lemma}\label{lem:cycle-rounding}
Suppose that $\mathcal{X}$ and $\mathcal{Y}$ are finite sets,
$N>0$ is an integer,
and $P_{XY}$ is a distribution with the property that both $NP_X$ and $NP_Y$ are integer.
Then there exists $Q_{XY}$ satisfying $Q_X=P_X$, $Q_Y=P_Y$, $\max_{x,y}|Q_{XY}(x,y)-P_{XY}(x,y)|\le \frac1{N}$,
and that $NQ_{XY}$ is integer.
\end{lemma}
Lemma~\ref{lem_von} has no approximation error, 
whereas 
Lemma~\ref{lem:cycle-rounding} shows approximation on the level of the distribution (not just the min value) but has a $1/N$ approximation error.
For our purpose, it is in fact possible to use only Lemma~\ref{lem:cycle-rounding}, since the $1/N$ gap vanishes in the limit.
The proofs of both lemmas can be found in the appendix.

By Proposition~\ref{prop_1}, $(\left<Y^N,z^N\right>)_{z^N\in\mathcal{C}}$ is a sub-Gaussian process with respect to the following metric on $T^N$:
\begin{align}
d_N(z^N,\hat{z}^N):=\sqrt{\sum_{i=1}^N d^2(z_i,\hat{z}_i)}.
\label{e_dn}
\end{align}
The following observation is crucial:
\begin{lemma}\label{lem_type}
For any rational $\mu$, 
we have 
\begin{align}
\lim_{\mathcal{I}\ni N\to\infty}\frac1{N}\mathbb{E}
[\max_{z^N\in\mathcal{C}_{\mu}}\left<Y^N,z^N\right>]
=\mathsf{w}(\mu),
\end{align}
where $\lim_{\mathcal{I}\ni N\to\infty}$ denotes the limit along the subsequence in $\mathcal{I}$. 
\end{lemma}
\begin{proof}
By Lemma~\ref{lem_von}, we see that for any $N\in\mathcal{I}$,
\begin{align}
\frac1{N}\max_{z^N\in\mathcal{C}_{\mu}}\left<y^N,z^N\right>
&=\frac{\mathbb{E}_{\widehat{P}_{y^N}}[\|Y\|_2^2]
+\mathbb{E}_{\mu}[\|Z\|_2^2]
-W_2^2(\widehat{P}_{y^N},\mu)}
{2}.
\end{align}
Define $M_N:=\sup_{P_{Y'}\colon W_2(P_Y,P_{Y'})\le N^{-\frac1{2n}}}
\frac{\mathbb{E}_{P_{Y'}}[\|Y\|_2^2]
+\mathbb{E}_{\mu}[\|Z\|_2^2]
-W_2^2(P_{Y'},\mu)}
{2}$.
Since $W_2$ is a metric (satisfying the triangle inequality), we see that $\mathbb{E}_{P_{Y'}}[\|Y\|_2^2]
-W_2^2(P_{Y'},\mu)$ is continuous in $P_{Y'}$ with respect to $W_2$, and hence
\begin{align}
\lim_{\mathcal{I}\ni N\to\infty}
M_N
&=
\sup_{P_{YZ}\in\Pi(P_Y,\mu)}\mathbb{E}[\left<Y,Z\right>]
\\
&=\mathsf{w}(\mu)
.
\label{e_lem60}
\end{align}
Define
\begin{align}
\mathcal{D}
&:=\{y^N\in\mathbb{R}^{nN}\colon
W_2(P_Y, \widehat{P}_{y^N})\le N^{-\frac1{2n}}
\}.
\end{align}
By the Wasserstein law of large numbers (cf.\ \cite{chewi2024statistical}), we have $\lim_{\mathcal{I}\ni N\to\infty}\mathbb{P}[Y^N\in \mathcal{D}]=1$.
Therefore, 
\begin{align}
\limsup_{\mathcal{I}\ni  N\to\infty}\frac1{N}\mathbb{E}
[\max_{z^N\in\mathcal{C}}\left<Y^N,z^N\right>1_{Y^N\in\mathcal{D}}]
&=\limsup_{\mathcal{I}\ni  N\to\infty}
\mathbb{E}[
\sup_{P_{YZ}\in\Pi(\widehat{P}_{Y^N},\mu)}
\mathbb{E}[\left<Y,Z\right>]
1_{Y^N\in\mathcal{D}}
]
\label{e_lem63}
\\
&\le 
\lim_{\mathcal{I}\ni  N\to\infty}M_N\mathbb{E}
[1_{Y^N\in\mathcal{D}}]
=
\mathsf{w}(\mu),
\label{e_lem61}
\end{align}
where the first equality follows from Lemma~\ref{lem_von}.
Similarly, if the $\sup$ in the definition of $M_N$ is replaced by $\inf$,
then \eqref{e_lem60} still holds, but the inequality sign in \eqref{e_lem61} will be reversed, and the $\limsup$ in \eqref{e_lem63} will be replaced by $\liminf$.
Hence we have 
\begin{align}
\lim_{\mathcal{I}\ni  N\to\infty}\frac1{N}\mathbb{E}
[\max_{z^N\in\mathcal{C}}\left<Y^N,z^N\right>1_{Y^N\in\mathcal{D}}]
=
\mathsf{w}(\mu).
\label{e_lem62}
\end{align}

It remains to show that $\lim_{\mathcal{I}\ni  N\to\infty}\frac1{N}\mathbb{E}
[\max_{z^N\in\mathcal{C}_{\mu}}\left<Y^N,z^N\right>1_{Y^N\in\mathcal{D}^c}]
=0$,
which, by the Cauchy-Schwarz inequality, would follow once we show
\begin{align}
\limsup_{\mathcal{I}\ni  N\to\infty}\mathbb{E}
\left[
\left(
\frac1{N}\max_{z^N\in\mathcal{C}_{\mu}}\left<Y^N,z^N\right>
\right)^2
\right]
<\infty.
\label{e_l61}
\end{align}
To show \eqref{e_l61}, first note that, due to $|T|<\infty$, we have  $\|\frac1{N}\left<Y^N,z^N\right>\|_G\le \frac{C_1}{\sqrt{N}}$ for each $z^N\in\mathcal{C}$,
where $C_1,C_2,C_3>0$  here are some constants independent of $N$.
Since $\sqrt{\ln|\mathcal{C}_{\mu}|}\le C_2\sqrt{N}$,
by \cite[Exercise~2.37]{vershynin2018high}
we have 
$\|\frac1{N}\max_{z^N\in\mathcal{C}_{\mu}}\left<Y^N,z^N\right>\|_G\le C_3\cdot\frac{C_1}{\sqrt{N}}\cdot C_2\sqrt{N}=C_1C_2C_3$. 
Then \eqref{e_l61} follows since the sub-Gaussian norm dominates the square root of the second moment.
\end{proof}

We have the following estimate of the covering number $\mathsf{N}(\mathcal{C},\cdot)$ under the metric $d_N$:

\begin{lemma}\label{lem_nc}
Let $\mu$ be rational, and $\mathcal{C}:=\mathcal{C}_{\mu}$.
For any $\sigma>0$, we have
\begin{align}
\limsup_{\mathcal{I}\ni N\to\infty}
\frac1{N}\ln \mathsf{N}(\mathcal{C},2\sqrt{N}\sigma)
&\le 
i_{\mu}(\sigma);
\label{e37_1}
\\
\liminf_{\mathcal{I}\ni N\to\infty}\frac1{N}\ln \mathsf{N}(\mathcal{C},2\sqrt{N}\sigma)
&\ge 
i_{\mu}(2\sigma).
\label{e37_2}
\end{align}
\end{lemma}
\begin{proof}
For any $t^N\in \mathcal{C}$, we have
\begin{align}
\mu^{\otimes N}(B(t^N,\sqrt{N}\sigma))
=\sum_{P_{Z\hat{Z}}\in\mathcal{S}_N}
\mu^{\otimes N}(\mathcal{C}_{P_{Z\hat{Z}}}(t^N))
\end{align}
where $\mathcal{S}_N$ denotes set of all $P_{Z\hat{Z}}\in\Pi_{\sigma}(\mu)$ such that $NP_{Z\hat{Z}}$ is integer,
and $\mathcal{C}_{P_{Z\hat{Z}}}(t^N):=\{u^N\in{T}^N\colon \textrm{$(t^N,u^N)$ has type $P_{Z\hat{Z}}$}\}$.
%
Evidently, 
\begin{align}
\max_{P_{Z\hat{Z}}\in\mathcal{S}_N}
\mu^{\otimes N}(\mathcal{C}_{P_{Z\hat{Z}}}(t^N))
\le 
\sum_{P_{Z\hat{Z}}\in\mathcal{S}_N}
\mu^{\otimes N}(\mathcal{C}_{P_{Z\hat{Z}}}(t^N))
\le 
|\mathcal{S}_N|\max_{P_{Z\hat{Z}}\in\mathcal{S}_N}
\mu^{\otimes N}(\mathcal{C}_{P_{Z\hat{Z}}}(t^N))
\label{e_lem51}
\end{align}
where $\lim_{\mathcal{I}\ni N\to\infty}\frac1{N}\ln|\mathcal{S}_N|=0$ by 
Lemma~\ref{lem_ck}.
By
\eqref{e_ctype}, we have
\begin{align}
\left|\frac1{N}\ln\mu^{\otimes N}(\mathcal{C}_{P_{Z\hat{Z}}}(t^N))
+I(Z;\hat{Z})
\right|\le 
c_N
\label{e_lem52}
\end{align}
for any $P_{Z\hat{Z}}\in\mathcal{S}_N$,
where $c_N:=\frac{2|{T}|^2}{N}\ln(N+1)$.
In particular, this implies that 
\begin{align}
\left|\frac1{N}\ln\max_{P_{Z\hat{Z}}\in\mathcal{S}_N}\mu^{\otimes N}(\mathcal{C}_{P_{Z\hat{Z}}}(t^N))
+\min_{P_{Z\hat{Z}}\in\mathcal{S}_N}I(Z;\hat{Z})
\right|\le 
c_N.
\label{e_lem52_1}
\end{align}
Next, we show that 
\begin{align}
\lim_{\mathcal{I}\ni N\to\infty}\min_{P_{Z\hat{Z}}\in\mathcal{S}_N}
I(Z;\hat{Z})
=
i_{\mu}(\sigma).
\label{e_imu1}
\end{align}
Indeed, the $\ge$ part is obvious.
For the $\le$ part, 
let $\tau\in(0,\sigma/2)$ be arbitrary, 
and let $P_{Z\hat{Z}}^*$ be the minimizer of $I(Z;\hat{Z})$ in $\Pi_{\sigma-\tau}(\mu,\mu)$ (i.e., the optimal distribution in the definition of $i_{\mu}(\sigma-\tau)$).
By Lemma~\ref{lem:cycle-rounding},
there exists $P^N_{Z\hat{Z}}\in\mathcal{S}_N$ such that $\max_{z,u}|P_{Z\hat{Z}}^*(z,u)-P^N_{Z\hat{Z}}(z,u)|\le \frac1{N}$.
Therefore $\lim_{\mathcal{I}\ni N\to\infty}I(Z;\hat{Z})=i_{\mu}(\sigma-\tau)$ and $\lim_{\mathcal{I}\ni N\to\infty}\mathbb{E}[d^2(Z,\hat{Z})]=(\sigma-\tau)^2$ where $(Z,\hat{Z})\sim P^N_{Z\hat{Z}}$.
Then we see that the left side of \eqref{e_imu1} is upper bounded by $i_{\mu}(\sigma-\tau)$.
This establishes the $\le$ part of \eqref{e_imu1} by taking $\tau\downarrow 0$.
Now combining \eqref{e_lem51}, \eqref{e_lem52_1} and \eqref{e_imu1}, we obtain
\begin{align}
\left|\frac1{N}\ln\frac1{\mu^{\otimes N}(B(t^N,\sqrt{N}\sigma))}
-i_{\mu}(\sigma)
 \right|
\le c_N'
\label{e_38}
\end{align}
where $c_N'>0$ is a vanishing sequence possibly depending on $\sigma$ and $\mu$.
Let ${\sf P}(\mathcal{C},2\sqrt{N}\sigma)$ denote the packing number, i.e.\ the maximal number of points $\mathcal{C}$ separated by distance at least $2\sqrt{N}\sigma$;
then $\mathsf{N}(\mathcal{C},2\sqrt{N}\sigma)\le {\sf P}(\mathcal{C},2\sqrt{N}\sigma)$ (see e.g.\ \cite{van2014probability}),
and standard volume estimates yields ${\sf P}(\mathcal{C},2\sqrt{N}\sigma)\le \frac1{\min_{t^N\in \mathcal{C}}\mu^{\otimes N}(
B(t^N,\sqrt{N}\sigma))}$, proving \eqref{e37_1}.
Moreover, by a volume argument we also have  $\mathsf{N}(\mathcal{C},2\sqrt{N}\sigma)\cdot\max_{t^N\in \mathcal{C}}\mu^{\otimes N}(B(t^N,2\sqrt{N}\sigma))
\ge \mu^{\otimes N}(\mathcal{C})$.
By Lemma~\ref{lem_ck}, we have $\lim_{N\to\infty}\frac1{N}\ln\mu^{\otimes N}(\mathcal{C})=0$.
Then using \eqref{e_38} with $\sigma$ replaced by $2\sigma$, we obtain  \eqref{e37_2}.
\end{proof}

\begin{proof}[Proof of Theorem~\ref{thm1} (for finite $T$)]
When $|T|<\infty$, it suffices to consider the case of rational $\mu$, since the general case would then follow by a limiting argument.
Applying Dudley's integral \eqref{e_dudley} to $\mathcal{C}:=\mathcal{C}_{\mu}$ yields
\begin{align}
\mathbb{E}[\max_{z^N\in\mathcal{C}}
\left<Y^N,z^N\right>]
\le K\int_0^{\Delta_{\rm m}}
\sqrt{\ln \mathsf{N}(\mathcal{C},2\sqrt{N}\sigma)}
2\sqrt{N}\,d\sigma
\label{e36}
\end{align}
where
$\Delta_{\rm m}:= \sup_{N\in\mathcal{I}}\frac{{\rm diam}(\mathcal{C})}{2\sqrt{N}}<\infty$.
Now we multiply $1/N$ to \eqref{e36}, take $\mathcal{I}\ni N\to\infty$, and apply Lemma~\ref{lem_nc} and Lemma~\ref{lem_type}.
We can apply the dominated convergence theorem,
since 
$
\frac1{N}\sqrt{\ln \mathsf{N}(\mathcal{C},2\sqrt{N}\sigma)}
2\sqrt{N}\le \frac2{\sqrt{N}}\sqrt{\ln(n^N)}=2\sqrt{\ln n}
$, which is independent of $N$ and absolutely integrable on $\sigma\in [0,\Delta_{\rm m}]$.
Then Theorem~\ref{thm1} for finite $T$ follows from Lemma~\ref{lem_nc},  Lemma~\ref{lem_type}, and \eqref{e21}.
\end{proof}

\begin{proof}[Proof of Theorem~\ref{thm_both}]
As in the proof of Theorem~\ref{thm1}, we can identify $T=\{e_1,\dots,e_n\}$ as the standard basis vectors, so that $X_z=\left<Y,z\right>$.
Furthermore, we assume that $|\mathcal{Y}|<\infty$ and $P_{YZ}$ is rational; 
the general case can then be proved by a limiting argument, since both sides of \eqref{thm_both} are continuous with respect to the distribution when $|T|<\infty$.
Define 
\begin{align}
\mathcal{I}:=\{N\in\mathbb{Z}\colon \textrm{$N>0$, $NP_{YZ}$ is integer}\}.
\end{align}
For each $N\in\mathcal{I}$, let $\mathcal{C}_{\mu}$ denote the type class of $\mu$, consisting of all sequences of type $\mu=P_Z$.
Select 
\begin{align}
L:=\lfloor\exp(NI(Y;Z)+N^{1/3})\rfloor
\end{align} 
sequences from $\mathcal{C}_{\mu}$ uniformly at random with replacement, and call this random set $\mathcal{A}$. 
Let $Y^N\sim P_Y^{\otimes N}$ (independent of this random set).
Then we will show that 
\begin{align}
\mathbb{E}[\left<Y,Z\right>]
\le 
\liminf_{\mathcal{I}\ni N\to\infty}\mathbb{E}[\max_{z^N\in\mathcal{A}}\frac1{N}\left<Y^N,z^N\right>].
\label{e_66}
\end{align}
Indeed, set 
\begin{align}
\mathcal{B}&:=\{(y^N)\colon \widehat{P}_{y^N}=P_Y\};
\\
\mathcal{D}&:=\{(y^N,z^N)\colon \widehat{P}_{y^Nz^N}=P_{YZ}\},
\end{align}
and for any $y^N\in\mathcal{B}$, define $\mathcal{D}(y^N):=\{z^N\colon (y^N,z^N)\in\mathcal{D}\}$.
Standard results about type estimates (immediate from Lemma~\ref{lem_t2} and Lemma~\ref{lem_ck}) state that:
\begin{align}
\left|\frac1{N}\ln |\mathcal{D}(y^N)|
-H(Z|Y)
\right|
&\le 
C_1\frac{\ln N}{N},
\quad \forall y^N\in \mathcal{B},
\label{e_64}
\\
\left|\frac1{N}\ln|\mathcal{C}_{\mu}|
-H(Z)
\right|
&\le 
C_1\frac{\ln N}{N},
\end{align}
where $C_1,C_2\dots$ in this proof are positive constants independent of $N$.
Let $\pi$ be the projection from $\mathbb{R}^{nN}$ to $\mathcal{B}$ (i.e., map to the closest point in $\mathcal{B}$, with ties broken arbitrarily).
Define $\mathcal{E}$ as the event that $\mathcal{D}(\pi(Y^N))\cap \mathcal{A}=\emptyset$.
Then, \eqref{e_64} implies that
\begin{align}
\mathbb{P}[\mathcal{E}]
&=
\left(1-
|\mathcal{D}(y^N)|/
|\mathcal{C}_{\mu}|
\right)^L
\\
&\le \left(1-
\exp(-NI(Y;Z)-2C_1\ln N)
\right)^L
\\
&\le 
\exp(-C_2N^{1/3}),
\end{align}
where $y^N$ is any sequence in $\mathcal{B}$.
From the definitions it is clear that 
\begin{align}
\max_{z^N\in\mathcal{A}}\frac1{N}\left<\pi(Y^N),z^N\right>1_{\mathcal{E}^c}
\ge \mathbb{E}[\left<Y,Z\right>]1_{\mathcal{E}^c}
\label{e_t21}
\end{align}
for all $N\in\mathcal{I}$ almost surely.
By our assumption of $|T|,|\mathcal{Y}|<\infty$, we have $\left|\max_{z^N\in\mathcal{A}}\frac1{N}\left<\pi(Y^N),z^N\right>\right|\le \max_{y\in\mathcal{Y},z\in T}|\left<y,z\right>|\le C_3$,
therefore $\lim_{\mathcal{I}\ni N\to\infty}\mathbb{P}[\mathcal{E}]=0$ and the Cauchy-Schwarz inequality shows that 
\begin{align}
\limsup_{\mathcal{I}\ni N\to\infty}\mathbb{E}
\left[\left|\max_{z^N\in\mathcal{A}}\frac1{N}\left<\pi(Y^N),z^N\right>\right|\cdot 1_{\mathcal{E}}\right]
=0.
\end{align}
Then \eqref{e_t21} implies 
\begin{align}
\liminf_{\mathcal{I}\ni N\to\infty}
\mathbb{E}\left[\max_{z^N\in\mathcal{A}}\frac1{N}\left<\pi(Y^N),z^N\right>
\right]
\ge \mathbb{E}[\left<Y,Z\right>].
\end{align}
Furthermore,
\begin{align}
\max_{z^N\in\mathcal{A}}\frac1{N}\left<Y^N,z^N\right>
\ge 
\max_{z^N\in\mathcal{A}}\frac1{N}\left<\pi(Y^N),z^N\right>
-\max_{z^N\in\mathcal{A}}\frac1{N}\left<\pi(Y^N)-Y^N,z^N\right>
\end{align}
From Lemma~\ref{lem_von}, we have
\begin{align}
\max_{z^N\in\mathcal{A}}\frac1{N}\left<\pi(Y^N)-Y^N,z^N\right>
&\le 
\frac1{\sqrt{N}}\|\pi(Y^N)-Y^N\|_2\cdot
\frac1{\sqrt{N}}\max_{z^N\in\mathcal{A}}\|z^N\|_2
\\
&\le C_4W_2(P_Y,\widehat{P}_{Y^N})
\end{align}
for some $C_4>0$ independent of $N$ since $|T|<\infty$.
Note that $W_2(P_Y,\widehat{P}_{Y^N})$ converges to 0 in expectation \citep{chewi2024statistical}.
Thus \eqref{e_66} is verified by collecting the bounds above.

It remains to apply the Dudley inequality to the right side of \eqref{e_66}.
Since $\mathcal{A}$ is contained in the set $\mathcal{C}$ in Lemma~\ref{lem_nc}, the upper bound part of Lemma~\ref{lem_nc} also applies to covering numbers of $\mathcal{A}$. 
Furthermore, we have the trivial bound $|\mathcal{A}|\le L$.
Therefore,
\begin{align}
\limsup_{\mathcal{I}\ni N\to\infty}\frac1{N}\ln \mathsf{N}(\mathcal{A},2\sqrt{N}\sigma)
\le
 i_{\mu}(\sigma)\wedge I(Y;Z).
\end{align}
The claim then follows by applying dominated convergence (see proof of Theorem~\ref{thm1}).
\end{proof}

\section{Proof of sharpness of the rate-distortion integral}
\label{sec_maj}
In this section we present a proof of Theorem~\ref{thm2}  using sharpness of Dudley's integral for stationary Gaussian processes.
We first consider the case where $n:=|T|<\infty$ and $\mu$ is rational.
Recall the representation of $X_z$ as an inner product 
in \eqref{e_rep},
which embeds $Y,Z$ in $\mathbb{R}^n$ where $Y$ is a normal vector,
and the definitions of $Y^N$, $\mathcal{I}$, and $\mathcal{C}:=\mathcal{C}_{\mu}$ as in \eqref{e_rep}, \eqref{e_ni}, \eqref{e_c1}.
Then $(\left<Y^N,z^N\right>)_{z^N\in\mathcal{C}}$ is a stationary Gaussian process with respect to the metric \eqref{e_dn}, according to Definition~\ref{def_stat},
due to invariance under the transitive group of coordinate permutations acting on $\mathcal{C}$.
By Lemma~\ref{lem_type} and the sharpness of Dudley's integral for stationary Gaussian processes \eqref{e_reverse}, we see that 
\begin{align}
K'{\sf w}(\mu)
&\ge 
\liminf_{\mathcal{I}\ni N\to\infty}
\frac1{N}\int_0^{\infty} \sqrt{\ln \mathsf{N}(\mathcal{C},\sqrt{N}\epsilon)} \sqrt{N}\,d\epsilon
\\
&\ge \int_0^{\infty}\sqrt{i_{\mu}(\epsilon)}d\epsilon
\label{e2170}  
\end{align}
where $\mathcal{I}$ is defined in \eqref{e_ni}, and the last step follows from 
Lemma~\ref{lem_nc} (with $\epsilon:=2\sigma$) and Fatou's Lemma.
The lower bound in terms of $R_{\mu}$ follows from \eqref{e21}.
This establishes the theorem for finite $T$ and rational $\mu$.
Next, we can drop the rationality assumption under $|T|<\infty$: 
It is easy to see that both $\mathsf{w}(\mu)$ and $\int_0^{\infty}\sqrt{i_{\mu}(\sigma)}d\sigma$ are continuous with respect to $\mu$, so that the rational case implies the general case by a limiting argument.
For extension to countable case see the appendix.

\section{Penalization form and new proof of the majorizing measure theorem}\label{sec_new}
In this section,
we prove Corollary~\ref{cor_maj0}.
First, observe the following auxiliary result allowing us to write integrals in a ``penalized optimization'' form,
which will be useful not only for proof of Corollary~\ref{cor_maj0} but also in bounding the errors for empirical risk minimization. 
\begin{lemma}\label{lem_equi}
Suppose that $y\in(0,\infty)$ is a decreasing function of $x\in(0,\infty)$.
Then
\begin{align}
2\int_0^{\infty} ydx
\le 
\int_0^{\infty}\min_x\{\alpha^{-2}x^2+y^2\}d\alpha
\le 
4\int_0^{\infty} ydx
\end{align}
\end{lemma}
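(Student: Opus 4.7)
The plan is to identify, for each $\alpha>0$, a balanced choice $x^{*}(\alpha)$ on the curve $y=y(x)$ satisfying $x^{*}(\alpha)=\alpha\,y(x^{*}(\alpha))$ (the unique crossing point where the two competing terms are equal), and to sandwich the minimum $\phi(\alpha):=\min_x\{\alpha^{-2}x^{2}+y(x)^{2}\}$ between $y(x^{*}(\alpha))^{2}$ and $2y(x^{*}(\alpha))^{2}$. The upper half of the sandwich is immediate by plugging in $x=x^{*}(\alpha)$, since then $\alpha^{-2}(x^{*})^{2}=y(x^{*})^{2}$. For the lower half I would use the trivial bound $\alpha^{-2}x^{2}+y(x)^{2}\ge\max\{\alpha^{-2}x^{2},y(x)^{2}\}$ and observe that the pointwise maximum of an increasing function of $x$ starting at $0$ and a decreasing function of $x$ ending at $0$ is minimized precisely at their crossing point $x^{*}(\alpha)$, with value $y(x^{*}(\alpha))^{2}$.

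With the sandwich $y(x^{*}(\alpha))^{2}\le \phi(\alpha)\le 2\,y(x^{*}(\alpha))^{2}$ in hand, both inequalities of the lemma reduce to the single identity
\[
\int_{0}^{\infty}y(x^{*}(\alpha))^{2}\,d\alpha \;=\; 2\int_{0}^{\infty}y(x)\,dx.
\]
For this I would change variable using $\alpha(x):=x/y(x)$, which is strictly increasing because $y$ is positive and decreasing, and is the inverse of $\alpha\mapsto x^{*}(\alpha)$. A direct computation gives $y(x)^{2}\,d\alpha(x)=y\,dx-x\,dy(x)$, so by Stieltjes integration by parts
\[
\int_{0}^{\infty}y(x)^{2}\,d\alpha(x)\;=\;\int_{0}^{\infty}y\,dx-\int_{0}^{\infty}x\,dy(x)\;=\;2\int_{0}^{\infty}y\,dx,
\]
provided $[xy(x)]_{0}^{\infty}=0$. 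This last boundary condition is automatic whenever $\int_{0}^{\infty}y\,dx<\infty$: monotonicity gives $xy(x)\le 2\int_{x/2}^{x}y\,dt\to 0$ as $x\to\infty$ and $xy(x)\le \int_{0}^{x}y\,dt\to 0$ as $x\downarrow 0$; if instead $\int y\,dx=\infty$, the same identity shows $\int\phi\,d\alpha=\infty$ and both inequalities are trivial.

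The main technical point is justifying the change-of-variables identity when $y$ is only assumed decreasing and may therefore have jumps or flat pieces, so that $\alpha(\cdot)$ and $x^{*}(\cdot)$ need not be absolutely continuous or strictly monotone. I would address this by interpreting the manipulations above as Lebesgue-Stieltjes equalities and using that a monotone function has bounded variation on every compact subinterval of $(0,\infty)$; alternatively, one can approximate $y$ from above and below by smooth strictly decreasing envelopes, verify the identity in the smooth case, and pass to the limit via monotone convergence (the sandwich bound on $\phi$ survives such approximations). Once this bookkeeping is handled, the proof consists essentially of the two displayed lines above.
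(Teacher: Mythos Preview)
Your proposal is correct and follows essentially the same approach as the paper: both identify the balancing point $x^{*}(\alpha)$ with $\alpha=x^{*}/y(x^{*})$, sandwich $\phi(\alpha)$ between $y(x^{*})^{2}$ and $2y(x^{*})^{2}$, and then compute $\int y(x^{*}(\alpha))^{2}\,d\alpha=2\int y\,dx$ via the change of variable $d\alpha=(y\,dx-x\,dy)/y^{2}$. Your treatment is in fact a bit more careful than the paper's about boundary terms and the case where $y$ has jumps or flat pieces.
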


\begin{proof}
For any $\alpha\in(0,\infty)$, there is unique $x(\alpha)$ satisfying $\alpha=\frac{x(\alpha)}{y(x(\alpha))}$. Then $x$ and $y$ can be parameterized by $\alpha\in(0,\infty)$, and 
$
\int_0^{\infty}
(\alpha^{-2}x^2+y^2)d\alpha
=
\int_{\alpha=0}^{\infty}
2y^2\cdot\frac{ydx-xdy}{y^2}
=4\int_0^{\infty}
ydx
$.
The claim then follows since for any $\alpha$, $\frac{\min_{x\in(0,\infty)}\{\alpha^{-2}x^2+y^2(x)\}}{\alpha^{-2}x^2(\alpha)+y^2(x(\alpha))}\in[1/2,1]$.
\end{proof}
A similar idea (though in the form of series summation rather than integral) was previously used in proving equivalent forms of the majorizing measure theorem \citep{talagrand1994constructions}.
Here, we find Lemma~\ref{lem_equi} very convenient in forming information-theoretic functionals that are \emph{exactly} convex-concave (Remark~\ref{rem_mm}), and also simplifying evaluations of the rate-distortion integral in applications to ERM.

To prove Corollary~\ref{cor_maj0}, it suffices to consider the case of finite $T$, since the general case (where the lattice supremum in \eqref{e_maj0} is defined as $\sup_{T_0\subseteq T,|T_0|<\infty}\sup_{t\in T_0}$; see \cite{talagrand2014upper}) will then follow by taking the limit.
Then $T$ can be identified with points in $\mathbb{R}^{|T|}$ equipped with the $\ell_2$ metric.
For $i:=i_{\mu}$ in Definition~\ref{def_rd},
Lemma~\ref{lem_equi} implies
\begin{align}
4\sup_{\mu}\int_0^{\infty}\sqrt{i(\epsilon)}d\epsilon
\ge 
\sup_{P_Z}\int_0^{\infty}\min_{P_{Z\hat{Z}}\in\Pi(P_Z,P_Z)}\{
 \alpha^{-2}\mathbb{E}[\|Z-\hat{Z}\|_2^2]
+I(Z;\hat{Z})
\}
d\alpha
\label{e148}
\end{align}
where both supremums are over distributions on $T$
and the minimum is over couplings of $P_Z$ and $P_Z$.
The right side of \eqref{e148} is lower bounded by 
\begin{align}
\sup_{P_Z}\inf_{\mu}\int_0^{\infty}\min_{P_{U|Z}}\{ \alpha^{-2}\mathbb{E}[\|Z-U\|_2^2]
+D(P_{U|Z}\|\mu|P_Z)
\}
d\alpha,
\label{e120}
\end{align}
since equality would be achieved if $\inf_{\mu}$ were restricted to $\mu=P_Z$
and $\min_{P_{U|Z}}$ were restricted to $P_{U|Z}$ satisfying $P_{U|Z}P_Z\in \Pi(P_Z,P_Z)$.
Now define, for each $z,\alpha$,
$F_{\mu}(z,\alpha):=\min_{\nu}\{\alpha^{-2}\mathbb{E}_{U\sim \nu}[\|z-U\|_2^2+D(\nu\|\mu)]\}$.
Since the relative entropy is jointly convex (i.e., $D(\frac{\nu_1+\nu_2}{2}\|\frac{\mu_1+\mu_2}{2})\le \frac1{2}D(\nu_1\|\mu_1)+\frac1{2}D(\nu_2\|\mu_2)$),
we see that $F_{\mu}(z,\alpha)$ is convex in $\mu$.
The integral in \eqref{e120} equals $\int_0^{\infty}\mathbb{E}[F_{\mu}(Z,\alpha)]d\alpha$,
which is convex in $\mu$ and linear in $P_Z$.
Then Sion's minimax theorem implies that \eqref{e120} equals 
\begin{align}
&\quad\inf_{\mu}\sup_{P_Z}\int_0^{\infty}
\mathbb{E}[F_{\mu}(Z,\alpha)]d\alpha
=
\inf_{\mu}\max_{z\in T}\int_0^{\infty}F_{\mu}(z,\alpha)d\alpha.
\label{e121}
\end{align}
Now fix any $\mu$, $z$, $\alpha$, and let $P_U:=\nu$ achieve the minimum in the definition of $F_{\mu}(z,\alpha)$.
Define 
$r=a\sqrt{\mathbb{E}[\|z-U\|^2]}$, 
where $U\sim P_U$ and $a>1$ can be optimized later. 
We obtain, from the data processing inequality \citep{thomas2006elements},
\begin{align}
D(P_U\|\mu)
\ge 
d(P_U(B(z,r))\| \mu(B(z,r)))
\end{align}
where $d(a\|b):=a\ln\frac{a}{b}+(1-a)\ln\frac{1-a}{1-b}$ denotes the binary relative entropy function and $B(z,r)$ denotes the ball centered at $z$ with radius $r$.
By the Markov inequality we have $P_U(B(z,r))\ge 1-a^{-2}$.
Then we have 
\begin{align}
d(P_U(B(z,r))\| \mu(B(z,r)))
\ge 
(1-a^{-2})\ln\frac{1-a^{-2}}{\mu(B(z,r))}-a^{-2}\ln a^2
\label{e123}
\end{align}
because if $\mu(B(z,r))>1-a^{-2}$ then the right side of \eqref{e123} is negative and the inequality is obviously true; if $\mu(B(z,r))\le 1-a^{-2}$ then by monotonicity of the binary relative entropy function we have $d(P_U(B(z,r))\| \mu(B(z,r)))\ge d(1-a^{-2}\| \mu(B(z,r))) 
= (1-a^{-2})\ln\frac{1-a^{-2}}{\mu(B(z,r))}+a^{-2}\ln\frac{a^{-2}}{1-\mu(B(z,r))}$
and \eqref{e123} also holds.
Now the second integral in \eqref{e121} is lower bounded by 
\begin{align}
\int_0^{\infty}  
\inf_{r>0}
\left\{\frac{r^2}{a^2\alpha^2}
+\left[
(1-a^{-2})\ln\frac{1-a^{-2}}{\mu(B(z,r))}-a^{-2}\ln a^2
\right]_+
\right\}
d\alpha
\end{align}
which, by Lemma~\ref{lem_equi}, is lower bounded by twice of
\begin{align}
&\int_0^{{\rm diam}(T)}\sqrt{\left[
(1-a^{-2})\ln\frac{1-a^{-2}}{\mu(B(z,r))}-a^{-2}\ln a^2
\right]_+}\frac{dr}{a}
\nonumber\\
&\ge
\int_0^{{\rm diam}(T)}\sqrt{
(1-a^{-2})\ln\frac1{\mu(B(z,r))}}
\frac{dr}{a}
-
\int_0^{{\rm diam}(T)}\sqrt{h(a^{-2})}
\frac{dr}{a}
\\
&=\frac{\sqrt{1-a^{-2}}}{a}I_{\mu}(z)
-\frac{{\rm diam}(T)}{a}\sqrt{h(a^{-2})}
\end{align}
where we used the basic inequality $\sqrt{[x-y]_+}\ge \sqrt{x}-\sqrt{y}$ for $x,y\ge 0$.
Thus \eqref{e2170}  implies
\begin{align}
2K'\mathbb{E}[\sup_{z\in T}X_z]
\ge 
\inf_{\mu}\sup_z\frac{\sqrt{1-a^{-2}}}{a}I_{\mu}(z)
-\frac{{\rm diam}(T)}{a}\sqrt{h(a^{-2})}.
\end{align}
It is easy to see, by considering two points in $T$, that $\mathbb{E}[\sup_{z\in T}X_z]\ge {\rm diam}(T)\mathbb{E}[G\vee 0]=\sqrt{\frac1{2\pi}}{\rm diam}(T)$, where $G\sim \mathcal{N}(0,1)$.
We finish the proof of Corollary~\ref{cor_maj0} by
\begin{align}
\left(2K'
+\frac{\sqrt{2\pi h(a^{-2})}}{a}\right)
\mathbb{E}[\sup_{z\in T}X_z]
\ge 
\frac{\sqrt{1-a^{-2}}}{a}\inf_{\mu}\sup_zI_{\mu}(z).
\end{align}

\begin{remark}\label{rem_mm}
The minimax inequality step \eqref{e120}-\eqref{e121} crucially leverages Lemma~\ref{lem_equi} and the joint convexity of the relative entropy. 
Specifically, the integral in \eqref{e120} is convex in $\mu$ and linear in $P_Z$,
and this property is not true for the functional 
$
\int_0^{\infty}
\min_{P_{U|Z}\colon \mathbb{E}[\|Z-U\|_2^2\le \epsilon^2}
\sqrt{D(P_{U|Z}\|\mu|P_Z)}
d\epsilon$ before we applied Lemma~\ref{lem_equi}.
Moreover, had we not used the relative entropy approach,
then the integral in \eqref{e120} might be replaced by the functional
$\int_0^{\infty}\min_{r_Z}\{ \alpha^{-2}\mathbb{E}[r_Z^2]
+\ln\frac1{\mu(B(Z,r_Z))}
\}
d\alpha$
(see the $\theta_i(T)$ functional defined around  \cite[Proposition~4.3]{talagrand1994constructions}),
where $P_{U|Z}$ is replaced by the conditional distribution of $\mu$ on the  ball $B(Z,r_Z)$, $r_Z$ being a function of $Z$.
Again, this functional is not exactly convex in $\mu$, so the application of the minimax theorem is less immediate. 
A classical approach that has a similar role as \eqref{e120}-\eqref{e121} is the Fernique convexity argument, which establishes equivalences \eqref{e1.3}-\eqref{e1.4} but loses a constant factor \citep{talagrand2014upper}.
\end{remark}

\section{Proof of ERM over the Sobolev ellipsoid}\label{sec_erm}
In this section we apply the rate-distortion integral to the ellipsoid, and in particular,  prove Theorem~\ref{thm_ellip}.

\subsection{Bounding the rate-distortion integral by variance}
\label{sec_toy2}

A key step in the analysis for the ellipsoid
(and possibly many other examples) is to upper bound the rate-distortion integral  using the covariance matrix.
One possible approach is to show that, under the constraint $\cov(Z)=\Sigma$, the Gaussian distribution maximizes the rate-distortion function, i.e.
\begin{align}
R_{P_Z}(\sigma^2)
&\le R_{\mathcal{N}(0,\Sigma)}(\sigma^2)
\label{e_rg1}
\end{align}
for all $\sigma^2>0$.
While it is possible to prove \eqref{e_rg1} using Lieb's rotation argument for proving Gaussian optimality (see \cite{liu2018forward,courtade2021euclidean} for the analysis of a similar problem called the 
reverse Brascamp-Lieb inequality), 
we will not provide the details here.
Instead, we will use 
an alternative, simpler approach, leveraging the penalized version of the rate-distortion integral:
from Lemma~\ref{lem_equi}, we have
\begin{align}
\int_0^{\infty}\sqrt{R_{P_Z}(\sigma^2)}\,d\sigma
\le\frac1{2}
\int_0^{\infty}
\inf_{P_{U|Z}}\{\alpha^{-2}\mathbb{E}[\|U-Z\|^2]+I(U;Z)\}d\alpha.
\label{e43}
\end{align}
This penalized form is more convenient, because $P_{U|Z}$ is optimized over a set independent of $P_X$, so that saddle point analysis applies,
and because exact tensorization holds, allowing direct extension of 1-dimensional bounds to the vector case. 
Define 
\begin{align}
f(t):=
\left\{
\begin{array}{cc}
   t/2  & 0\le t\le 1 \\
    \frac{1+\ln t}{2} & t>1 
\end{array}
\right..
\end{align}
We have the following:
\begin{lemma}\label{lem_thm8}
For any distribution $P_Z$ on $\mathbb{R}$, we have
\begin{align}
\inf_{P_{U|Z}}\{\alpha^{-2}\mathbb{E}[|U-Z|^2]+I(U;Z)\}
&
\le
f(2\alpha^{-2}\sigma_{\rm m}^2)
\end{align}
for any given $\alpha>0$,
where $\sigma_{\rm m}^2:=\var(Z)$.
\end{lemma}

\begin{proof}
By translation invariance, it suffices to restrict the optimization to the set of $P_Z$ satisfying $\mathbb{E}[|Z|^2]\le \sigma_{\rm m}^2$.
Define the functional
\begin{align}
f(P_Z,P_{U|Z})
:=
\alpha^{-2}\mathbb{E}[|U-Z|^2]+I(U;Z),
\end{align}
which is convex in $P_{U|Z}$ and concave in $P_Z$.
Therefore by checking the first-order stationarity, 
we see that 
there exists an additive Gaussian noise channel $P_{U|Z}^*$ of the form $U=aZ+bW$ (where $W\sim \mathcal{N}(0,1)$ is independent of $Z$, and $a,b$ are appropriate constants),
such that 
\begin{align}
\sup_{P_Z:\,\mathbb{E}[|Z|^2]\le \sigma_{\rm m}^2}
f(P_Z,P_{U|Z}^*)
=
f(P_Z^*,P_{U|Z}^*)
= 
\inf_{P_{U|Z}} f(P_Z^*,P_{U|Z}),
\end{align}
where $P_Z^*:=\mathcal{N}(0,\sigma_{\rm m}^2)$
(This is known as the ``Gaussian saddle point'' in \cite{thomas2006elements}).
Hence
\begin{align}
\inf_{P_{U|Z}} f(P_Z^*,P_{U|Z})
\ge 
\sup_{P_Z:\,\mathbb{E}[|Z|^2]\le \sigma_{\rm m}^2}
f(P_Z,P_{U|Z}^*)
\ge 
\sup_{P_Z:\,\mathbb{E}[|Z|^2]\le \sigma_{\rm m}^2}
\inf_{P_{U|Z}}f(P_Z,P_{U|Z}).
\end{align}
This shows that, under the moment constraint $\mathbb{E}[|Z|^2]\le \sigma_{\rm m}^2$, the right side of \eqref{e43} is maximized by $P_Z=\mathcal{N}(0,\sigma_{\rm m}^2)$.
In that case,
\begin{align}
\inf_{P_{U|Z}}
\{\alpha^{-2}\mathbb{E}[|U-Z|^2]+I(U;Z)\}
&=
\inf_{D>0}\left\{\alpha^{-2}D+\frac1{2}\Bigl[\ln \frac{\sigma_{\rm m}^2}{D}\Bigr]_+\right\}
\\
&=
\left(\frac1{2}+\frac1{2}\ln\frac{2\sigma_{\rm m}^2}{\alpha^2}\right)
1\{\alpha<\sqrt{2}\sigma_{\rm m}\}
+
\alpha^{-2}\sigma_{\rm m}^2
1\{\alpha\ge\sqrt{2}\sigma_{\rm m}\},
\end{align}
which is exactly $f(2\alpha^{-2}\sigma_{\rm m}^2)$.
\end{proof}

Next, we extend Lemma~\ref{lem_thm8} to the case of vectors followed by a Lipschitz transformation:
\begin{lemma}\label{lem_2}
Assume that $W\in\ell_2$ is a random variable and $\Sigma:=\cov(W)$.
Let $Z=\phi(W)$, where $\phi$ is a 1-Lipschitz function, and suppose that $\mathbb{E}[\|Z\|^2]\le \epsilon$.
Then, for any $\alpha\in(0,\infty)$,
\begin{align}
\inf_{P_{U|Z}}
\{\alpha^{-2}\mathbb{E}[\|U-Z\|^2]+I(U;Z)
\}
\le
\alpha^{-2}\epsilon
\wedge\sum_{i=1}^{\infty}
f(2\alpha^{-2}\Sigma_{ii}).
\end{align}
\end{lemma}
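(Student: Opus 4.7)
The plan is to establish the two terms of the minimum by two separate choices of $U$.

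The easy term $\alpha^{-2}\epsilon$ is obtained by taking $U\equiv 0$ deterministically: then $I(U;Z)=0$ and $\mathbb{E}[\|U-Z\|^2] = \mathbb{E}[\|Z\|^2] \le \epsilon$, so the objective does not exceed $\alpha^{-2}\epsilon$.

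For the nontrivial bound $\sum_i f(2\alpha^{-2}\Sigma_{ii})$, I would construct $U = \phi(U_W)$, where $U_W$ is a coordinate-wise Gaussian compression of $W$. After a harmless recentering so $\mathbb{E}[W]=0$, I pick a distortion level $D_i \in (0,\Sigma_{ii}]$ per coordinate and define $U_{W,i} := c_i W_i + N_i$ with $c_i := 1 - D_i/\Sigma_{ii}$ and $N_i \sim \mathcal{N}(0, c_iD_i)$ drawn independently of $W$ and of the other noises. A direct computation gives $\mathbb{E}[(U_{W,i}-W_i)^2] = D_i$ and $\operatorname{var}(U_{W,i}) = c_i\Sigma_{ii}$, so combined with the Gaussian maximum-entropy bound for $h(U_{W,i})$ this reproduces the standard Gaussian upper bound on the one-dimensional rate–distortion function,
\[
I(W_i;U_{W,i}) = h(U_{W,i}) - h(N_i) \le \tfrac{1}{2} \ln^+(\Sigma_{ii}/D_i).
\]
Because $P_{U_W|W}=\prod_i P_{U_{W,i}|W_i}$, standard chain-rule manipulations --- conditional independence of the $U_{W,i}$ given $W$, plus subadditivity of differential entropy --- yield $I(W;U_W) \le \sum_i I(W_i;U_{W,i})$. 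Setting $U := \phi(U_W)$, the 1-Lipschitz property of $\phi$ gives $\mathbb{E}[\|Z-U\|^2] \le \mathbb{E}[\|W-U_W\|^2] = \sum_i D_i$, while the data-processing inequality along the Markov chain $Z$--$W$--$U_W$--$U$ gives $I(Z;U) \le I(W;U_W)$. Thus the objective is bounded by $\sum_i[\alpha^{-2}D_i + \tfrac{1}{2}\ln^+(\Sigma_{ii}/D_i)]$, and minimizing each summand separately in $D_i>0$ produces exactly $f(2\alpha^{-2}\Sigma_{ii})$ by the computation already carried out in the proof of Theorem~\ref{thm8}.

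The main technical wrinkle I anticipate is justifying the additivity $I(W;U_W) \le \sum_i I(W_i;U_{W,i})$ in the infinite-dimensional setting, since differential entropies of infinite-dimensional distributions need not be defined. I would handle this by truncation: first establish the bound for the finite-dimensional compression of coordinates $1,\dots,n$ (with the tail of $U_W$ set to $0$), where the entropies are all well-defined, and then pass $n\to\infty$ using lower semicontinuity of mutual information. In the regime where the claimed bound is nontrivial we have $\sum_i \Sigma_{ii} < \infty$, which also ensures $\mathbb{E}\|U_W\|^2 \le \sum_i c_i \Sigma_{ii} < \infty$ and hence $U_W\in\ell_2$ almost surely, so that $\phi(U_W)$ makes sense.
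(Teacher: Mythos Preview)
Your proposal is correct and follows essentially the same route as the paper. The paper also takes $U\equiv 0$ for the $\alpha^{-2}\epsilon$ bound, reduces to the case $\phi=\mathrm{id}$ by noting that the left side only decreases under a Lipschitz map (which is exactly your construction $U=\phi(U_W)$ combined with Lipschitz contraction and data processing along $Z$--$W$--$U_W$--$U$), then uses a product channel $P_{U|W}=\prod_i P_{U_i|W_i}$ together with the subadditivity $I(W;U)\le\sum_i I(W_i;U_i)$ and the one-dimensional Gaussian saddle-point bound of Theorem~\ref{thm8}. Your explicit Gaussian test channel $U_{W,i}=c_iW_i+N_i$ is precisely the optimizer implicit in Theorem~\ref{thm8}, so the two arguments coincide; the paper is simply terser and does not spell out the infinite-dimensional truncation you (rightly) flag.
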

\begin{proof}
By considering $U=0$, it is easy to see that
$\inf_{P_{U|X}}
\{\alpha^{-2}\mathbb{E}[\|U-Z\|^2]+I(U;Z)
\}
\le
\alpha^{-2}\epsilon$.
Thus it remains to show that 
\begin{align}
\inf_{P_{U|Z}}
\{\alpha^{-2}\mathbb{E}[\|U-Z\|^2]+I(U;Z)
\}
\le
\sum_{i=1}^{\infty}
f(2\alpha^{-2}\Sigma_{ii}).
\label{e192}
\end{align}
It suffices to prove \eqref{e192} in the special case where the 1-Lipschitz map $\phi$ is the identity,
since the left side of \eqref{e192} only decreases with the application of a 1-Lipschitz $\phi$.
Furthermore, the left side of \eqref{e192} is upper bounded by $\sum_{i=1}^{\infty}\inf_{P_{U_i|Z_i}}
\{\alpha^{-2}\mathbb{E}[|U_i-Z_i|^2]+I(U_i;Z_i)
\}$,
because given $P_{U_i|Z_i}$ we can define $P_{U|Z}=\prod_{i=1}^{\infty}P_{U_i|Z_i}$,
and then $I(U;Z)\le \sum_{i=1}^{\infty}I(U_i;Z_i)$.
Then the result in \eqref{e192} follows from the one-dimensional case, which we already proved in Lemma~\ref{lem_thm8}.
\end{proof}

\subsection{Warm-up: supremum over an ellipsoid}

Let $X_z:=\left<G,z\right>$, where $G=(G(1),G(2),\dots)$ has i.i.d.\ standard normal coordinates.
The majorizing measure theorem shows that 
(see \cite[Section~2.13]{talagrand2014upper}): 
\begin{align}
\mathbb{E}[\sup_{z\in\mathcal{E}}
X_z]
\asymp 
\sqrt{\sum_{i=0}^{\infty}2^ia_{2^i}^2}
\asymp
\sqrt{\sum_{i=1}^{\infty}a_i^2}.
\label{e32}
\end{align}
On the other hand, a nontrivial calculation \cite{talagrand2014upper} shows
that the Dudley integral is order $
\sum_{i=0}^{\infty}2^{i/2}a_{2^i}
$,
hence may not be sharp.
While the upper bound in  \eqref{e32} also follows from a simple application of the Cauchy-Schwarz inequality,
the majorizing measure theorem implies that the upper bound continues to hold when $T$ is the image of the ellipsoid $\mathcal{E}$ under a 1-Lipschitz map, which is a nontrivial.

Here we show that sharpness is regained using the rate-distortion integral. 
Suppose that $Z'$ is a random variable taking values in $\mathcal{E}$ and $\Sigma:=\cov(Z')$.
We have
\begin{align}
\int_0^{\infty}\sqrt{R_{P_{Z'}}(\sigma^2)}\,d\sigma
&\lesssim 
 \sum_{i=1}^{\infty} \sqrt{\Sigma_{ii}}
\label{e_schur}
\\
&\le \|(a_i)_{i=1}^{\infty}\|_2
\label{e_cs}
\end{align}
where 
\eqref{e_schur} follows from the \eqref{e43} and Lemma~\ref{lem_2}, noting that $\int_0^{\infty} f(2\alpha^{-2}\Sigma_{ii})d\alpha\asymp \sqrt{\Sigma_{ii}}$,
and \eqref{e_cs} follows from the ellipsoid constraint $\sum_{i=1}^{\infty}\frac{\Sigma_{ii}}{a_i^2}\le 1$ and the Cauchy-Schwarz inequality.
Then for any random variable $Z\in\mathcal{E}$, we have  
$\mathbb{E}[X_Z]\lesssim \|(a_i)_{i=1}^{\infty}\|_2$, which follows from Proposition~\ref{prop1}, by taking $T$ and $\mu'$ therein to be $\mathcal{E}$ and $P_{Z'}$ in \eqref{e_cs}, respectively.

Let us remark that the upper bound in \eqref{e32} can also be obtained using  Corollary~\ref{cor1}.
Indeed, $\tr(\sqrt{\Sigma})
\le \sum_{i=1}^{\infty} \sqrt{\Sigma_{ii}}$ by the Schur concavity of the square root function and the fact that the eigenvalues majorize the diagonal values. 
By approximating general $P_Z$ by distributions with countable supports and using the same argument transitioning from \eqref{e_schur} to \eqref{e_cs},
we can obtain the upper bound in \eqref{e32}.

For the case of the Sobolev ellipsoid, we see the supremum in \eqref{e32} is order $\sqrt{\frac1{2\beta-1}}$ as $\beta\downarrow 1/2$.
In contrast, the Dudley integral is 
\begin{align}
\sum_{i=0}^{\infty}2^{i/2}a_{2^i}
=
\sum_{i=0}^{\infty}
2^{-\beta i+i/2}, 
\label{e_dud48}
\end{align}
giving a worse upper bound of order $\frac1{2\beta-1}$
as $\beta\downarrow 1/2$; see \cite[Example 5.12]{wainwright2019high} or \cite[Section~2.13]{talagrand2014upper}.


\subsection{Error bound for ERM}
\label{sec73}
In this section we prove Theorem~\ref{thm_ellip}.
Recall \eqref{e312}, where we defined the centered sub-Gaussian process
$(\chi_z)_{T-T}$,
and $T$ is now taken to be a 1-Lipschitz image of the Sobolev ellipsoid.
Our proof is based on the following key estimate:

\begin{theorem}\label{thm_mw}
If $\mathbb{E}[\|Z\|_2^2]\le E$ for some $E>0$, 
then $\mathbb{E}[\chi_Z]\lesssim C_{\beta}E^{\frac{2\beta-1}{4\beta}}
$ for some function $C_{\beta}>0$ (defined around  \eqref{e84}) satisfying $\lim_{\beta\downarrow \frac1{2}}\sqrt{2\beta-1}C_{\beta}<\infty$.
\end{theorem}

\begin{remark}\label{rem_trad}
In \cite[Lemma~5.1]{van1990estimating}, 
the ERM error for the Sobolev space of functions was bounded by localizing the Dudley integral,
where $\beta\ge 1$ is assumed an integer.
Suppose that we now extend the approach of   \cite{van1990estimating} to the setting of Theorem~\ref{thm_ellip}:
The covering number is bounded as
\begin{align}
\ln N(\delta,\mathcal{E})
\lesssim
\delta^{-\frac1{\beta}}
\label{e60}
\end{align}
where we assume throughout this Remark that $\beta$ is bounded above (i.e.\ we only focus on the behavior of the bound as $\beta\downarrow 1/2$).
\cite[Example~2.1]{van1990estimating}  estimated the covering number for fixed $\beta$.
For the Sobolev ellipsoid $\mathcal{E}$,
\cite[Example~5.12]{wainwright2019high}
provided a proof of \eqref{e60},
from which we can see that the best preconstant in \eqref{e60} is the order of a constant independent of $\beta$.
For local covering numbers, \cite{wei2020gauss} showed that the estimate in \eqref{e60} still holds, where the preconstant is a universal constant.
We can then calculate the Dudley integral up to the scale of the error $\sqrt{E}$, resulting in a bound of $\int_0^{\sqrt{E}} 
\delta^{-\frac1{2\beta}}d\delta
\lesssim
\frac1{2\beta-1}E^{\frac1{2}-\frac1{4\beta}}$.
Using our proof instead,
we get $\mathbb{E}[\chi_Z]\lesssim \frac1{\sqrt{2\beta-1}}E^{\frac1{2}-\frac1{4\beta}}$, 
which is almost the same, but with a better preconstant.
Thus Dudley's integral results in
$\mathbb{E}[\|\hat{m}-m\|^2]\lesssim\frac1{2\beta-1}
n^{-\frac{2\beta}{2\beta+1}}$,
where the preconstant $\frac1{2\beta-1}$ is worse than $\frac1{\sqrt{2\beta-1}}$ in Theorem~\ref{thm_ellip}.
This gap is related to the looseness of the Dudley integral: as we see in \eqref{e_dud48}, Dudley's integral gives a bound of order $\frac{1}{2\beta-1}$ on the supremum over $\mathcal{E}$, rather than the optimal 
$\sqrt{\frac{1}{2\beta-1}}$.
\end{remark}

How does the rate-distortion integral improve the preconstant?
A naive approach would be to first show that for a random variable $Z\in 2\mathcal{E}$, we have $R_{P_Z}(\delta^2)\lesssim \delta^{-\frac1{\beta}}$,  where we assume throughtout this paragraph that $\beta$ is bounded above (i.e.\ we only focus on the behavior of the bound as $\beta\downarrow 1/2$).
This is analogous to \eqref{e60}, and we may proceed by plugging it into the rate-distortion integral.
But this will still result in a preconstant $\frac1{2\beta-1}$ rather than $\frac1{\sqrt{2\beta-1}}$.
The reason is that the $P_Z$ maximizing $R_{P_Z}(\delta^2)$ changes with $\delta$. 
Instead, we leverage the penalized version \eqref{e43} and evaluate the integrand therein for given $\cov(Z)$, calculate the integral, and optimize $\cov(Z)$ in the end. 
The proof is made simple by properties of information-theoretic measures such as chain rules of the entropy.

We first need a few preparations. 
We have
\begin{lemma}\label{lem_ft}
$t\int_0^{1/t}f(\frac1{u^2})du\le 4f(t)$, for all $t>0$.
\end{lemma}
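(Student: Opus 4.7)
The plan is to verify the inequality by a direct case analysis based on the value of $t$, exploiting the piecewise definition of $f$. The key observation is that $f(1/u^2)$ itself is piecewise: for $u<1$ one has $1/u^2>1$, so $f(1/u^2)=(1-2\ln u)/2$, while for $u>1$ one has $1/u^2<1$, so $f(1/u^2)=1/(2u^2)$. Thus the integrand switches formulas exactly at $u=1$, and only two cases arise, depending on whether $1/t$ lies above or below $1$.

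For $t>1$ (so $1/t<1$), the entire integration range falls into the regime $1/u^2>1$. I would compute $t\int_0^{1/t}(1-2\ln u)/2\,du$ using the antiderivative $3u-2u\ln u$, together with $\lim_{u\downarrow 0}u\ln u=0$ to handle the endpoint. The computation yields $(3+2\ln t)/2$, so the desired bound reduces to $(3+2\ln t)/2\le 2(1+\ln t)$, i.e.\ $\ln t\ge -1/2$, which is automatic for $t>1$.

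For $0<t\le 1$ (so $1/t\ge 1$), I would split the integral at $u=1$: on $(0,1)$ the same antiderivative gives $3/2$, and on $(1,1/t)$ the formula $\int u^{-2}\,du=-1/u$ yields $(1-t)/2$. Summing and multiplying by $t$ produces $2t-t^2/2$, while $4f(t)=2t$, so the claim $2t-t^2/2\le 2t$ is immediate. The only mild obstacle is bookkeeping at the junction $u=1$ and handling the integrable singularity of $\ln u$ at $u=0$; both are routine, and in particular the inequality turns out to have comfortable slack in each case (the constant $4$ is not tight).
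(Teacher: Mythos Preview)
Your proof is correct and follows essentially the same approach as the paper: both split into the cases $1/t\le 1$ and $1/t>1$, compute the integral explicitly on each piece using the same antiderivatives, and compare with $4f(t)$. (Minor typo: your stated antiderivative should be $(3u-2u\ln u)/2$ rather than $3u-2u\ln u$, but your final values $(3+2\ln t)/2$ and $2t-t^2/2$ are correct.)
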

\begin{proof}
For $0<u_0\le 1$,
\begin{align}
\int_0^{u_0}f(\frac1{u^2})du 
=\int_0^{u_0}\frac{1+\ln\frac1{u^2}}{2}du
=\frac{3}{2}u_0+u_0\ln\frac1{u_0}
\le \frac{3}{2}u_0(1+\ln\frac1{u_0}).
\end{align}
For $u_0>1$,
$
\int_1^{u_0}f(\frac1{u^2})du 
=\int_1^{u_0}\frac1{2u^2}du
=\frac1{2}(1-\frac1{u_0})
$, so that 
\begin{align}
\int_0^{u_0}f(\frac1{u^2})du =\frac{3}{2}+\frac1{2}(1-\frac1{u_0})
= 2-\frac1{2u_0}\le 2.
\end{align}
The result follows by taking $u_0=1/t$.
\end{proof}

\begin{lemma}\label{lem3}
Suppose that $\sum_{i=1}^{\infty}\Sigma_{ii}i^{\beta}\le 4$, where $\beta>1/2$. 
For any ${E}>0$, we have
\begin{align}
\int_0^{\infty}
\alpha^{-2}{E}
\wedge\sum_{i=1}^{\infty}
f(2\alpha^{-2}\Sigma_{ii})
d\alpha
\le 
C_{\beta}{E}^{\frac{2\beta-1}{4\beta}},
\end{align}
where $C_{\beta}$ is a constant defined around  \eqref{e84}, which satisfies $\lim_{\beta\downarrow 1/2}\sqrt{2\beta-1}C_{\beta}<\infty$.
\end{lemma}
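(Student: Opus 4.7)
The plan is to drop the $\alpha^{-2}\epsilon$ term from the minimum, reduce the bound to an estimate on $\sum_i\sqrt{\Sigma_{ii}}$, and control that sum by Cauchy--Schwarz with a carefully chosen cutoff.

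First I would use $\min\{a,b\}\le b$, interchange the sum and integral via Tonelli, and substitute $u=\alpha/\sqrt{2\Sigma_{ii}}$ in each summand to obtain the closed form
\[
\int_0^\infty f(2\Sigma_{ii}/\alpha^2)\,d\alpha \;=\; \sqrt{2\Sigma_{ii}}\int_0^\infty f(1/u^2)\,du \;=\; 2\sqrt{2\Sigma_{ii}},
\]
where $\int_0^\infty f(1/u^2)\,du=\tfrac{3}{2}+\tfrac{1}{2}=2$ by a direct computation (or as the $t\downarrow 0$ limit of Lemma~\ref{lem_ft}). Summing in $i$ shows that the left-hand side of the Lemma is bounded by $2\sqrt{2}\sum_i\sqrt{\Sigma_{ii}}$.

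Next I would control $\sum_i\sqrt{\Sigma_{ii}}$ by Cauchy--Schwarz split at a threshold $M\in\mathbb{N}$. Using $\sum_{i\le M}\Sigma_{ii}\le\mathrm{tr}(\Sigma)\le\epsilon$, which follows from $\mathbb{E}\|Z\|^2\le\epsilon$ in the natural setting where $\phi$ is the identity and $\mathbb{E}[W]=0$,
\[
\sum_{i\le M}\sqrt{\Sigma_{ii}}\;\le\;\sqrt{M\textstyle\sum_{i\le M}\Sigma_{ii}}\;\le\;\sqrt{M\epsilon}.
\]
For the tail I would use the ellipsoid constraint (read as $\sum_i\Sigma_{ii}i^{2\beta}\le 4$, matching the Sobolev parametrization $a_i=i^{-\beta}$ upstream):
\[
\sum_{i>M}\sqrt{\Sigma_{ii}}\;\le\;\Bigl(\sum_{i>M}i^{-2\beta}\Bigr)^{1/2}\Bigl(\sum_i\Sigma_{ii}i^{2\beta}\Bigr)^{1/2}\;\le\;\frac{2\,M^{-(2\beta-1)/2}}{\sqrt{2\beta-1}}.
\]
Balancing the two halves by solving $M^{2\beta}\asymp 1/((2\beta-1)\epsilon)$ gives
\[
\sum_i\sqrt{\Sigma_{ii}}\;\lesssim\;\Bigl(\frac{1}{2\beta-1}\Bigr)^{1/(4\beta)}\epsilon^{(2\beta-1)/(4\beta)}.
\]
Since $1/(4\beta)\to 1/2$ as $\beta\downarrow 1/2$, the prefactor behaves like $1/\sqrt{2\beta-1}$, matching the claimed behaviour of $C_\beta$.

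The main obstacle is the trace bound $\sum_{i\le M}\Sigma_{ii}\le\epsilon$ used in the head estimate. It is immediate in the identity-map case, but under the general hypothesis of Lemma~\ref{lem_2} the inequality $\mathbb{E}\|Z\|^2\le\epsilon$ only controls $\mathrm{tr}(\mathrm{cov}(Z))$, not $\mathrm{tr}(\Sigma)=\mathrm{tr}(\mathrm{cov}(W))$. If this becomes binding, one instead splits $\int_0^\infty=\int_0^A+\int_A^\infty$, uses the tail bound $\int_A^\infty\alpha^{-2}\epsilon\,d\alpha=\epsilon/A$, and bounds the body via Lemma~\ref{lem_ft} as $\int_0^A f(2\Sigma_{ii}/\alpha^2)\,d\alpha\le 4A f(\sqrt{2\Sigma_{ii}}/A)$; the resulting sum is then split into the linear piece $\{\Sigma_{ii}\le A^2/2\}$ and the logarithmic piece $\{\Sigma_{ii}>A^2/2\}$ using $\Sigma_{ii}\le 4 i^{-2\beta}$, and $A$ is optimized against $M$. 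Tracking constants through this two-parameter optimization so as to recover the prefactor $1/\sqrt{2\beta-1}$ is where the bulk of the bookkeeping lies.
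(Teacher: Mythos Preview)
Your primary route does not prove Lemma~\ref{lem3} as stated. Once you replace $\min\{a,b\}$ by $b$, the bound $2\sqrt{2}\sum_i\sqrt{\Sigma_{ii}}$ carries no $\epsilon$-dependence at all; you then reintroduce $\epsilon$ only through the extra hypothesis $\sum_i\Sigma_{ii}\le\epsilon$, which is \emph{not} among the assumptions of the lemma (and, as you correctly flag, is unavailable for a general Lipschitz $\phi$ in Lemma~\ref{lem_2}). So the main argument establishes only a special case, and one the paper already dispatches by a direct Cauchy--Schwarz in the first proof of Theorem~\ref{thm_ellipsoid}.

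Your fallback --- split $\int_0^\infty$ at a cutoff $A$, bound the tail by $\epsilon/A$, and bound the body by $4A\sum_i f\bigl(\sqrt{2\Sigma_{ii}}/A\bigr)$ via Lemma~\ref{lem_ft} --- is exactly the paper's strategy, with $A=\alpha_0:=\epsilon^{(1+2\beta)/(4\beta)}$. The difference is in how the remaining sum $\sum_i f\bigl(\sqrt{2\Sigma_{ii}}/\alpha_0\bigr)$ is controlled under $\sum_i\Sigma_{ii}i^{2\beta}\le 4$: the paper passes to a continuous relaxation $\Sigma(t)$ and solves the Euler--Lagrange conditions explicitly, obtaining $\sum_i f(\cdot)\le c_\beta\,\alpha_0^{-2/(1+2\beta)}$ with $c_\beta=O(1/\sqrt{2\beta-1})$. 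Your sketched ``linear/log split using $\Sigma_{ii}\le 4i^{-2\beta}$'' has a sub-gap: on the linear piece the contribution is $2\sqrt{2}\sum_{\text{lin}}\sqrt{\Sigma_{ii}}$, and the pointwise bound alone gives $\sum_i i^{-\beta}$, which diverges for $\beta\le 1$. You must use the full sum constraint there as well (e.g.\ Cauchy--Schwarz against the tail of $\sum i^{-2\beta}$), after which the balancing of the pieces against $\epsilon/A$ is essentially the same one-parameter optimization the paper carries out variationally.
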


\begin{proof}
    Set
$
\alpha_0:={E}^{\frac{1+2\beta}{4\beta}}$.
We have 
\begin{align}
\int_{\alpha_0}^{\infty}\alpha^{-2}{E}
\wedge d\alpha
&\le {E}^{\frac{2\beta-1}{4\beta}}.
\label{e64}
\end{align}
Moreover, by Lemma~\ref{lem_ft}
\begin{align}
\int_0^{\alpha_0}\sum_{i=1}^{\infty}
f(2\alpha^{-2}\Sigma_{ii})d\alpha
&=
\sum_{i=1}^{\infty}
\sqrt{2\Sigma_{ii}}
\int_0^{\frac{\alpha_0}{\sqrt{2\Sigma_{ii}}}}
f(\frac1{u^2})du
\\
&\le  
\alpha_0\sum_{i=1}^{\infty}
4f(\frac{\sqrt{2\Sigma_{ii}}}{\alpha_0}).
\label{e66}
\end{align}
It will become clear that $\alpha_0$ was chosen so that \eqref{e64} and \eqref{e66} are both upper bounded by ${E}^{\frac{2\beta-1}{4\beta}}$.
Therefore, it remains to consider the optimization problem:
\begin{align}
M_1:=\sup_{(\Sigma_i)}\sum_{i=1}^{\infty}
f(\frac{\sqrt{2\Sigma_i}}{\alpha_0})
\end{align}
under the constraint 
$\sum_{i=1}^{\infty}\Sigma_i i^{2\beta}
\le 4$.
Also, consider another optimization problem:
\begin{align}
M_2:=\sup_{\Sigma(\cdot)}\int_0^{\infty}
f(\frac{\sqrt{2\Sigma(t)}}{\alpha_0})\,dt
\end{align}
under the constraint 
\begin{align}
 \int_0^{\infty} \Sigma(t)t^{2\beta}\,dt\le 4
 \label{e_sigma}
\end{align}
where $\Sigma(\cdot)$ is a nonnegative continuous function.
Clearly $M_2\ge M_1$, since given $(\Sigma_{ii})$ we can define $\Sigma(t)=\Sigma_{\lceil t\rceil}$.

Let $\bar{f}(x):=f\!\left(\frac{\sqrt{2x}}{\alpha_0}\right)$ for $x\ge 0$, which is a concave function.
Using the Lagrange multiplier method, 
we are led to consider $\Sigma(\cdot)$ satisfying 
\begin{align}
\bar{f}'(\Sigma(t))=\lambda t^{2\beta}, \quad\forall t \ge 0,
\label{e_l}
\end{align}
for some $\lambda>0$.
Considering the properties of $\bar{f}$, we see that there exists a unique choice of $\Sigma(\cdot)$ and $\lambda$ such that \eqref{e_l} holds and \eqref{e_sigma} achieves equality,
which must be an optimizer for  $M_2$ since stationarity condition holds for this convex optimization.
Furthermore, $\Sigma(\cdot)$ must be continuous and decreasing.
There exists a unique $t_0>0$ such that
\[
\frac{\sqrt{2\Sigma(t)}}{\alpha_0}\ge 1, \quad t\in(0,t_0],
\qquad
\frac{\sqrt{2\Sigma(t)}}{\alpha_0}\le 1, \quad t\in[t_0,\infty).
\]
Stationarity condition implies that there exists $\lambda>0$ such that 
\begin{align}
\frac1{4\Sigma(t)}&=\lambda t^{2\beta},
\quad t\in (0,t_0];
\\
\frac1{2\alpha_0\sqrt{2\Sigma(t)}}
&=\lambda t^{2\beta},
\quad t\in [t_0,\infty).
\end{align}
Setting $t=t_0$ in both equations and canceling $\Sigma(t_0)$, we obtain
$
\lambda=\frac1{2\alpha_0^2t_0^{2\beta}}
$.
Also, we can express $\Sigma(t)$ as a function of $t$ from the above, and $\int_0^{\infty} \Sigma(t)t^{2\beta}\,dt=4$ must be achieved, implying
\begin{align}
4&= 
\int_0^{t_0}\frac1{4\lambda}\,dt
+
\int_{t_0}^{\infty}
\frac1{8\alpha_0^2\lambda^2t^{2\beta}}\,dt
\\
&=\frac{t_0}{4\lambda}
+
\frac1{8(2\beta-1)\alpha_0^2\lambda^2t_0^{2\beta-1}}
\\
&=\frac{\beta}{2\beta-1}\alpha_0^2t_0^{2\beta+1}.
\end{align}
Now that we are able to express $t_0$ and $\lambda$ as functions of $\alpha$, we can evaluate the optimal value:
\begin{align}
\int_0^{t_0}
f(\frac{\sqrt{2\Sigma(t)}}{\alpha_0})\,dt
&=
\int_0^{t_0}
\frac{1+\ln\frac{\sqrt{2\Sigma(t)}}{\alpha_0}}{2}\,dt
\\
&=\int_0^{t_0}
\frac{1+\beta\ln\frac{t_0}{t}}{2}\,dt
\\
&=\frac{t_0(1+\beta)}{2};
\\
\int_{t_0}^{\infty}
f(\frac{\sqrt{2\Sigma(t)}}{\alpha_0})\,dt
&=
\int_{t_0}^{\infty}\frac{\sqrt{2\Sigma(t)}}{2\alpha_0}\,dt
\\
&=\int_{t_0}^{\infty}
\frac{t_0^{2\beta}}{2t^{2\beta}}\,dt
\\
&=\frac{t_0}{2(2\beta-1)}.
\end{align}
In summary, we have shown that 
$
M_1\le M_2
\le \left(
\frac{1+\beta}{2}
+\frac1{2(2\beta-1)}
\right)t_0
\le c_{\beta}\alpha_0^{-\frac{2}{1+2\beta}}
$,
where we defined
\begin{align}
c_{\beta}=
\left(
\frac{1+\beta}{2}
+\frac1{2(2\beta-1)}
\right)
\left(\frac{\beta}{4(2\beta-1)}
\right)^{-\frac1{1+2\beta}}.
\label{e84}
\end{align}
Hence
\begin{align}
\int_0^{\infty}\alpha^{-2}{E}
\wedge\sum_{i=1}^d
f(2\alpha^{-2}\Sigma_{ii})
d\alpha
\le 
4\alpha_0M_1 
+{E}^{\frac{2\beta-1}{4\beta}}
=(4c_{\beta}+1){E}^{\frac{2\beta-1}{4\beta}}.
\end{align}
The proof is completed by setting $C_{\beta}=4c_{\beta}+1$.
\end{proof}

\begin{proof}[Proof of Theorem~\ref{thm_mw}]
We will apply Proposition~\ref{prop1} with $\mu:=P_Z$, and let $\mu'$ be an arbitrary distribution supported on $T-T$ and satisfying $W_{\infty}(\mu,\mu')\le \epsilon$.
Then there exists $Z'\sim \mu'$ such that $\|Z'-Z\|_2\le \epsilon$ almost surely, and hence $\mathbb{E}[\|Z'\|_2^2]\le E+\epsilon^2+2\sqrt{E}\epsilon$.
From Lemma~\ref{lem_2} and Lemma~\ref{lem3} we have
\begin{align}
\int_0^{\infty}
\inf_{P_{U|Z'}}
\{\alpha^{-2}\mathbb{E}[\|U-Z'\|^2]+I(U;Z')
\}
d\alpha
\le
C_{\beta}(E+\epsilon^2+2\sqrt{E}\epsilon)^{\frac{2\beta-1}{4\beta}}.
\end{align}
Then Proposition~\ref{prop1} and 
\eqref{e43} shows that
\begin{align}
\mathbb{E}[\chi_Z]
&\lesssim 
C_{\beta}\lim_{\epsilon\to0}(E+\epsilon^2+2\sqrt{E}\epsilon)^{\frac{2\beta-1}{4\beta}}
=
C_{\beta}E^{\frac{2\beta-1}{4\beta}}.
\end{align}
\end{proof}

\begin{proof}[Proof of Theorem~\ref{thm_ellip}]
Taking the expectation of \eqref{e312}
and applying Theorem~\ref{thm_mw},
we obtain
\begin{align}
 \mathbb{E}[\|\hat{m}-m\|_2^2]   \lesssim \frac1{\sqrt{n}}C_{\beta}
  \mathbb{E}^{\frac{2\beta-1}{4\beta}}[\|\hat{m}-m\|_2^2]
\end{align}
which implies the claimed result.
\end{proof}

\section{Acknowledgment}
The author would like to thank professor Ramon van Handel for feedback on an early version of the manuscript, and for encouraging the pursuit of a new proof of the majorizing measure theorem based on the rate-distortion integral.
The author also thanks professors Garvesh Raskutti, Philippe Rigollet and Yuting Wei for discussion and feedback on the regression examples.
This research 
was supported in part by NSF Grant DMS-2515510.

\appendix
\section{From matching to coupling}

\subsection{Proof of Lemma~\ref{lem_von}}

The $\ge$ part is obvious from the definition.
For the $\le$ part, fix an arbitrary $\bar{s}^N\in \mathbb{R}^{nN}$ satisfying $\widehat{P}_{\bar{s}^N}=\mu$.
Denote the supports of $\widehat{P}_{y^N}$ and $\mu$ by $\mathcal{A}$
and $\mathcal{B}$, respectively.
Consider the following function 
\begin{align}
F(M)&=\frac1{N}\sum_{1\le i,j\le N}\|y_i-\bar{s}_j\|_2^2M_{i,j}
\\
&=\frac1{N}\sum_{y\in\mathcal{A},\,s\in\mathcal{B}}
\|y-s\|_2^2
\sum_{i\colon y_i=y}
\sum_{j\colon \bar{s}_j=s}
M_{i,j}
\label{e_63}
\end{align}
on $\mathcal{S}$, which is defined as the set of doubly stochastic matrices $M\in\mathbb{R}^{N\times N}$.
We can see from \eqref{e_63} that $\inf_{M\in \mathcal{S}}F(M)\le W_2^2(\widehat{P}_{y^N},\mu)$.
Since $F$ is linear, by the Birkhoff-von Neumann theorem, there exists a permutation matrix $M$ achieving $\inf_{M\in \mathcal{S}}F(M)$.
This permutation sends $\bar{s}^N$ to an $s^N$, achieving $\inf_{M\in \mathcal{S}}F(M)=\frac1{N}\|y^N-s^N\|_2$.
This proves the $\le$ part.

\subsection{Proof of Lemma~\ref{lem:cycle-rounding}}
The proof follows immediately from the following observation, by taking $A:=[NP_{XY}(x,y)]_{x\in\mathcal{X},y\in\mathcal{Y}}$ and $B:=[NQ_{XY}(x,y)]_{x\in\mathcal{X},y\in\mathcal{Y}}$.
\begin{lemma}\label{lem:cycle-rounding0}
Let $A\in\mathbb{R}_{\ge 0}^{m\times n}$ have integer row sums and integer column sums. 
Then there exists $B\in\mathbb{Z}_{\ge 0}^{m\times n}$ with the same row and column sums as $A$ and
\[
\max_{i,j}\,|B_{ij}-A_{ij}|\;\le\;1 .
\]
\end{lemma}

\begin{proof}
Write $F:=A-\lfloor A\rfloor$, so $F_{ij}\in[0,1)$ for all $i,j$. 
Let $r_i:=\sum_j F_{ij}$ and $c_j:=\sum_i F_{ij}$. 
Because the row and column sums of $A$ are integers and $\sum_j\lfloor A_{ij}\rfloor,\sum_i\lfloor A_{ij}\rfloor$ are integers, we have $r_i,c_j\in\mathbb{Z}$.

Form the bipartite graph $G$ with left vertices $\{1,\dots,m\}$ (rows) and right vertices $\{1,\dots,n\}$ (columns),
and put an edge $(i,j)$ iff $F_{ij}\in(0,1)$. 
If some connected component of $G$ were a tree, it would have a leaf $v$ incident to a unique edge $e$ of weight $F_e\in(0,1)$, forcing the corresponding $r_i$ or $c_j$ to equal $F_e\notin\mathbb{Z}$, a contradiction. 
Hence every component of $G$ contains a cycle.

Fix a simple cycle $C$ in $G$. 
Along the edges of $C$, alternately add $+\varepsilon$ and $-\varepsilon$ to the corresponding entries of $F$ (leaving all other entries unchanged), where
\[
\varepsilon \;:=\; \min\{\,F_{ij},\,1-F_{ij} : (i,j)\in C\,\}.
\]
This preserves all row and column sums, since at each vertex on $C$ the two incident edges change by $+\varepsilon$ and $-\varepsilon$. 
Moreover, all modified entries remain in $[0,1]$, and at least one becomes $0$ or $1$. 
Remove edges for which $F_{ij}$ becomes 0 or 1.
Iterating this operation, which terminates in finitely many steps, we obtain a modified matrix $B$ with the desired properties.
\end{proof}

\section{Extension of the rate-distortion integral beyond finite $T$}\label{sec_countable}

In the main text, we proved the rate-distortion integral for finite $T$.
In this section, we extend the proof beyond the finite case by a limiting argument.
We also prove Proposition~\ref{prop1}.

\subsection{Absolute integrability of $|X_Z|$}
We first prove an auxiliary result about absolute integrability of $|X_Z|$, which will be used later to apply dominated convergence.
\begin{lemma}\label{lem_finite}
Suppose that $(X_t)_{t\in T}$ is a zero-mean sub-Gaussian process on a countable set $T$.
Let $Z\sim \mu$ be a random variable on $T$,
for which $\sigma_{\rm m}:=\sigma_{\rm m}(\mu)<\infty$.
\begin{itemize}
\item 
Consider $\bar{T}:=T\times \{-1,1\}$, 
and a process $(\bar{X}_{\bar{t}})_{\bar{t}\in \bar{T}}$, where
$\bar{X}_{\bar{t}}:= s X_t$ for $\bar{t}=(t,s)$.
Let $\bar{d}$ be the natural metric for this sub-Gaussian process on $\bar{T}$,
and $\bar{Z}:=(Z,S)$ be a random variable on $\bar{T}$, satisfying $P_Z=\mu$.
Define 
$\bar{\sigma}_{\rm m}^2
:=\inf_{\bar{t}\in \bar{T}}\mathbb{E}[\bar{d}^2(\bar{Z},\bar{t})]$,
and let $\bar{R}_{P_{\bar{Z}}}(\cdot)$ be the rate-distortion function for $\bar{Z}$ on $(\bar{T},\bar{d})$.
Then 
\begin{align}
\int_0^{\bar{\sigma}_{\rm m}}\sqrt{\bar{R}_{P_{\bar{Z}}}(\sigma^2)}d\sigma
\le 
\int_0^{\sigma_{\rm m}}\sqrt{R_Z(\sigma^2)}\,d\sigma
+\sqrt{2\ln2}\sigma_{\rm m}.
\label{e49}
\end{align}
\item 
If
$\int_0^{\sigma_{\rm m}} \sqrt{R_{\mu}(\sigma^2)}\,d\sigma<\infty$, then 
$\mathbb{E}[|X_Z|]<\infty$.
\end{itemize}
\end{lemma}
Lemma~\ref{lem_finite} may be compared with the classical observation that for symmetric processes, $\mathbb{E}[\sup_{t\in T}|X_t|]<\infty$ is equivalent to $\mathbb{E}[\sup_{t\in T}X_t]<\infty$, both equivalent to boundedness of the process in the Gaussian case; see \cite[p2]{talagrand1987regularity}, \cite[Exercise 2.2.2.]{talagrand2014upper} and the references therein.
\begin{proof}
We first prove \eqref{e49}.
The natural metric for the new process satisfies $\bar{d}((t,s), (t',s))=d(t,t')$, for any $s\in\{-1,+1\}$ and $t,t'\in T$.
Now suppose that $P_{\bar{Z}}=P_{Z,S}$ is an arbitrary distribution on $\bar{T}$.
We can assume without loss of generality that there is $t_0\in T$ satisfying $X_{t_0}\equiv 0$
and $\sigma_{\rm m}=\sqrt{\mathbb{E}[d^2(t_0,Z)]}$.
By the triangle inequality we can show that 
$
\bar{d}(\bar{Z},(t_0,+1))
\le d(Z,t_0)+\|2X_{t_0}\|_{\Psi_2}
=d(Z,t_0)
$, 
implying that $\bar{\sigma}_{\rm m}^2
\le \mathbb{E}[\bar{d}^2(\bar{Z},(t_0,+1))]
<
\sigma_{\rm m}^2$. 
In addition, suppose that $U$ is any random variable such that $R_{P_Z}(\sigma^2)=I(U;Z)$ and $\mathbb{E}[d^2(U,Z)]\le \sigma^2$ for some $\sigma^2$.
If we set $\bar{U}=(U,S)$, we have $\mathbb{E}[\bar{d}^2(\bar{Z},\bar{U})]=\mathbb{E}[d^2(Z,U)]\le \sigma^2$, and moreover, the rate-distortion function on $\bar{T}$ satisfies $\bar{R}_{P_{\bar{Z}}}(\sigma^2)\le I(\bar{U};\bar{Z})=I(U,S;Z,S)
=I(U;Z)+I(S;Z|U)
+H(S|Z)\le R(\sigma^2)+2\ln2$.
Then
\eqref{e49} follows by the sub-additivity of the square root function.

It remains to prove the second claim. 
Suppose that $T$ is enumerated as $t_0,t_1,\dots$,
assuming without loss of generality $X_{t_0}\equiv 0$,
$\sigma_{\rm m}=\sqrt{\mathbb{E}[d^2(t_0,Z)]}$,
and write $T_k:=\{t_0,\dots,t_k\}$.
Given $Z=Z(\omega)$, 
define $Z_k=Z$ if $Z\in T_k$ and $Z_k=t_0$ otherwise.
We can show that
\begin{align}
\sqrt{R_{P_{Z_k}}(\sigma^2)}
\le 
\sqrt{R_{P_Z}(\sigma^2)+2\ln2}
\le 
\sqrt{R_{P_Z}(\sigma^2)}+\sqrt{2\ln2}.
\label{e51}
\end{align}
Indeed, if $P_{ZU}$ is an optimal distribution in the definition of $R_{P_Z}(\sigma^2)$, then setting $E=1\{Z\notin T_k\}$ and  $U_k=U$ if $E=0$, $U_k=t_0$ otherwise, we have
\begin{align}
d(U_k,Z_k)\le d(U,Z),\quad \textrm{a.s.}
\end{align}
and 
\begin{align}
I(U_k; Z_k)&\le I(U_k,E; Z_k)
\\
&=I(E;Z_k)+I(U_k; Z_k|E)
\\
&\le H(E)+I(U_k; Z_k|E)
\\
&\le H(E)+I(U; Z|E)
\\
&\le H(E)+I(UE; Z)
\\
&\le 2H(E)+I(U; Z),
\label{e68}
\end{align}
establishing \eqref{e51}.
Now applying the result of Theorem~\ref{thm1} in the finite case, we have
\begin{align}
\mathbb{E}[|X_{Z_k}|]
&=
\mathbb{E}[{\rm sign}(X_{Z_k})X_{Z_k}]
\\
&\lesssim \int_0^{\bar{\sigma}_{\rm m }}\sqrt{\bar{R}_{P_{\bar{Z}_k}}(\sigma^2)}\,d\sigma
\\
&\lesssim \int_0^{\sigma_{\rm m }}\sqrt{R_{P_{Z_k}}(\sigma^2)}\,d\sigma
+\sqrt{2\ln2}\sigma_{\rm m},
\label{e61}
\end{align}
where we defined $\bar{Z}_k:=(Z_k,{\rm sign}(X_{Z_k}))$,
and \eqref{e61} follows from \eqref{e49} by replacing the arbitrary $P_Z$ with $P_{Z_k}$ and noting that $\sigma_{\rm m}(P_{Z_k})\le \sigma_{\rm m}(P_Z)=:\sigma_{\rm m}$.
Thus \eqref{e51} and \eqref{e61} imply that $\lim_{k\to\infty}\mathbb{E}[|X_{Z_k}|]<\infty$.
The proof is completed since 
$\lim_{k\to\infty} |X_{Z_k}|=|X_Z|$ almost surely, and Fatou's lemma shows
$
\mathbb{E}[|X_Z|]
\le 
\lim_{k\to\infty}\mathbb{E}[|X_{Z_k}|]
$.
\end{proof}

\subsection{Extension of Theorem~\ref{thm1} beyond finite $T$}
We extend the result of Theorem~\ref{thm1} from the finite case to the countable case.
We can assume that $\int_0^{\sigma_{\rm m}}
\sqrt{R_{\mu}(\sigma^2)}\,d\sigma<\infty$, since otherwise there is nothing to prove.
Again, assume without loss of generality that there is $t_0\in T$ satisfying $X_{t_0}\equiv 0$
and $\sigma_{\rm m}:=\sigma_{\rm m}(P_Z)=\sqrt{\mathbb{E}[d^2(t_0,Z)]}$.
Construct $Z_k$ and define $E$ as in the proof of Lemma~\ref{lem_finite}.
Lemma~\ref{lem_finite} implies
$
\mathbb{E}[|X_Z|]<\infty
$,
and since $|X_{Z_k}|\le |X_Z|\vee|X_{t_0}|$,
 dominated convergence implies
\begin{align}
\lim_{k\to\infty}\mathbb{E}[X_{Z_k}]
=\mathbb{E}[\lim_{k\to\infty}X_{Z_k}]
=\mathbb{E}[X_Z].
\label{e62}
\end{align}
From the definition of $Z_k$, we see that $\sigma_{\rm m}(P_{Z_k})\le \sigma_{\rm m}(P_Z)=:\sigma_{\rm m}$.
Hence $R_{P_{Z_k}}(\sigma^2)=0$ for all $\sigma\ge \sigma_{\rm m}$.
Then from the estimate in \eqref{e51},
we see that $\sqrt{R_{P_{Z_k}}(\sigma^2)}$ is dominated by the absolutely integrable function $(\sqrt{R_{P_Z}(\sigma^2)}+\sqrt{2\ln2})1_{\sigma\le \sigma_{\rm m}}$.
Furthermore, since $P(E)$ converges 0 as $k\to\infty$, \eqref{e68} implies that $\lim_{k\to\infty}R_{P_{Z_k}}(\sigma^2)=R_{P_Z}(\sigma^2)$ for any $\sigma>0$.
Therefore dominated convergence shows
\begin{align}
\lim_{k\to\infty}\int_0^{+\infty}\sqrt{R_{P_{Z_k}}(\sigma^2)}\,d\sigma
=
\int_0^{\sigma_{\rm m }}\sqrt{R_{P_Z}(\sigma^2)}\,d\sigma.
\label{e63}
\end{align}
Then the claim of the lemma follows by \eqref{e62}, \eqref{e63} and applying the result of Theorem~\ref{thm1} in the finite case (already proved in the main text).

\subsection{Proof of Proposition~\ref{prop1}}
Let $T_0$ be the countable subset in the definition of separability.
For any $\epsilon>0$, let $Z_{\epsilon}\in T_0$ be a random variable such that $|X_Z-X_{Z_{\epsilon}}|\le\epsilon$ and $d(Z,Z_{\epsilon})\le \epsilon$, which is possible by the separability and continuity assumption. 
Then 
$\mathbb{E}[X_Z]\le \mathbb{E}[X_{Z_{\epsilon}}]+\epsilon$,
and $W_{\infty}(\mu,\mu')\le \epsilon$, if $\mu'$ is the distribution of $Z_{\epsilon}$.
Therefore the claim follows by the result for the countable case and  taking $\epsilon\downarrow 0$.

\subsection{Extension of Theorem~\ref{thm2} beyond finite $T$}

We now use a limiting argument to extend the result to the case of $\mu$ supported on a countable $T$.
Assume that $T=\{t_0,t_1,\dots\}$.
Let $T_k:=\{t_0,t_1,\dots,t_k\}$, $\mu_k(t_i):=\frac{\mu(t_i)}{\mu(T_k)}1\{i\le k\}$,
and $\tilde{\mu}_k(t_i):=\frac{\mu(t_i)}{1-\mu(T_k)}1\{i>k\}$.
If $Z_k(\omega)$ maximizes $\mathbb{E}[X_{Z_k}]$ subject to $P_{Z_k}=\mu_k$, we can construct a Bernoulli random variable $E\sim {\rm Bern}(\mu(T_k))$ independent of $\omega$, 
and let $Z=Z_k$ when $E=1$, and sample $Z\sim \tilde{\mu}_k$ independently of $\omega$ when $E=0$.
Then $Z\sim \mu$, and 
\begin{align}
\mathbb{E}[X_Z]= \mu(T_k)\mathbb{E}[X_{Z_k}].
\label{e122}
\end{align}
Moreover, if $P_{U_k|Z_k}$ achieves the infimum in the definition of $R_{\mu_k}(\sigma^2)$, 
we can construct $P_{U|Z}$
by setting 
$P_{U|Z}(|z)=P_{U_k|Z_k}(|z)$ for $z\in T_k$ and $U=t_0$ if $z\notin T_k$.
Assume without loss of generality that $\mathbb{E}_{\mu}[d^2(t_0,Z)]=\sigma_{\rm m}^2$.
Then $\mathbb{E}[d^2(U,Z)]=\sigma^2\mu(T_k)
+\mathbb{E}[d^2(t_0,Z)1_{Z\in T_k^c}]
$.
Since $\lim_{k\to\infty}\mathbb{P}[T_k^c]=0$ and $\mathbb{E}[d^2(t_0,Z)]<\infty$, a basic result from measure theory (which can be proved by dominated convergence) states that 
$\lim_{k\to\infty}\mathbb{E}[d^2(t_0,Z)1_{Z\in T_k^c}]=0$.
In particular, for any given $\sigma>0$, there exists $k(\sigma)>0$ such that for all $k>k(\sigma)$, this construction of $P_{U|Z}$ ensures that $\mathbb{E}[d^2(U,Z)]<(1+a)\sigma^2$,
where $a>0$ is an arbitrary constant.
Moreover, if $E_k:=1\{Z\in T_k\}$, we have $I(U;Z)\le  I(U;Z|E)+H(E)=R_{\mu_k}(\sigma^2)+h(\mu(T_k))$,
where $h(\cdot)$ is the binary entropy function.
Taking $k\to\infty$ we have $\mu(T_k)\to 1$ and obtain
\begin{align}
\lim_{k\to\infty}R_{\mu_k}(\sigma^2) \ge R_{\mu}((1+a)\sigma^2) 
\end{align}
for any $\sigma>0$. Then using \eqref{e122} we obtain
\begin{align}
K'\mathbb{E}[X_Z]\ge  K'\lim_{k\to\infty}\mathbb{E}[X_{Z_k}]
\ge \lim_{k\to\infty}\int_0^{\infty}\sqrt{R_{\mu_k}(\sigma^2)}\,d\sigma
\ge \int_0^{\infty}\sqrt{R_{\mu}((1+a)\sigma^2)}d\sigma
\end{align}
by Fatou's lemma.
Finally, taking $a\downarrow0$ shows that the countable case holds with the same constant as the finite case.

\bibliographystyle{plainnat}
\bibliography{references.bib}

\begin{thebibliography}{48}
\providecommand{\natexlab}[1]{#1}
\providecommand{\url}[1]{\texttt{#1}}
\expandafter\ifx\csname urlstyle\endcsname\relax
  \providecommand{\doi}[1]{doi: #1}\else
  \providecommand{\doi}{doi: \begingroup \urlstyle{rm}\Url}\fi

\bibitem[Aminian et~al.(2023)Aminian, Bu, Toni, Rodrigues, and Wornell]{aminian2023information}
Gholamali Aminian, Yuheng Bu, Laura Toni, Miguel~RD Rodrigues, and Gregory~W Wornell.
\newblock Information-theoretic characterizations of generalization error for the gibbs algorithm.
\newblock \emph{IEEE Transactions on Information Theory}, 70\penalty0 (1):\penalty0 632--655, 2023.

\bibitem[Asadi et~al.(2018)Asadi, Abbe, and Verd{\'u}]{asadi2018chaining}
Amir Asadi, Emmanuel Abbe, and Sergio Verd{\'u}.
\newblock Chaining mutual information and tightening generalization bounds.
\newblock \emph{Advances in Neural Information Processing Systems}, 31, 2018.

\bibitem[Audibert and Bousquet(2003)]{audibert2003pac}
Jean-Yves Audibert and Olivier Bousquet.
\newblock {PAC}-bayesian generic chaining.
\newblock \emph{Advances in neural information processing systems}, 16, 2003.

\bibitem[Bartlett et~al.(2005)Bartlett, Bousquet, and Mendelson]{bartlett2005local}
Peter~L Bartlett, Olivier Bousquet, and Shahar Mendelson.
\newblock Local {Rademacher} complexities.
\newblock \emph{Annals of Statistics}, 33\penalty0 (4):\penalty0 1497--1537, 2005.

\bibitem[Bu et~al.(2020)Bu, Zou, and Veeravalli]{bu2020tightening}
Yuheng Bu, Shaofeng Zou, and Venugopal~V Veeravalli.
\newblock Tightening mutual information-based bounds on generalization error.
\newblock \emph{IEEE Journal on Selected Areas in Information Theory}, 1\penalty0 (1):\penalty0 121--130, 2020.

\bibitem[Chatterjee et~al.(2015)Chatterjee, Guntuboyina, and Sen]{chatterjee2015risk}
Sabyasachi Chatterjee, Adityanand Guntuboyina, and Bodhisattva Sen.
\newblock On risk bounds in isotonic and other shape restricted regression problems.
\newblock \emph{The Annals of Statistics}, 43\penalty0 (4):\penalty0 1774--1800, 2015.

\bibitem[Chatterjee(2014)]{chatterjee2014new}
Sourav Chatterjee.
\newblock A new perspective on least squares under convex constraint.
\newblock \emph{The Annals of Statistics}, 42\penalty0 (6):\penalty0 2340--2381, 2014.

\bibitem[Chewi et~al.(2024)Chewi, Niles-Weed, and Rigollet]{chewi2024statistical}
Sinho Chewi, Jonathan Niles-Weed, and Philippe Rigollet.
\newblock Statistical optimal transport.
\newblock \emph{arXiv preprint arXiv:2407.18163}, 3, 2024.

\bibitem[Chu and Raginsky(2023)]{chu2023majorizing}
Yifeng Chu and Maxim Raginsky.
\newblock Majorizing measures, codes, and information.
\newblock In \emph{2023 IEEE International Symposium on Information Theory (ISIT)}, pages 660--665, 2023.

\bibitem[Courtade and Liu(2021)]{courtade2021euclidean}
Thomas~A Courtade and Jingbo Liu.
\newblock Euclidean forward--reverse {Brascamp--Lieb} inequalities: Finiteness, structure, and extremals.
\newblock \emph{The Journal of Geometric Analysis}, 31\penalty0 (4):\penalty0 3300--3350, 2021.

\bibitem[Cover and Thomas(2006)]{thomas2006elements}
Thomas~M. Cover and Joy~A. Thomas.
\newblock \emph{Elements of Information Theory}.
\newblock Wiley-Interscience, 2 edition, 2006.

\bibitem[Csisz{\'a}r and K{\"o}rner(1981)]{CsiszarKorner1981}
Imre Csisz{\'a}r and J{\'a}nos K{\"o}rner.
\newblock \emph{Information Theory: Coding Theorems for Discrete Memoryless Systems}.
\newblock Academic Press, New York, 1st edition, 1981.

\bibitem[Dembo(2009)]{dembo2009large}
Amir Dembo.
\newblock \emph{Large Deviations Techniques and Applications}.
\newblock Springer, 2009.

\bibitem[Dudley(2016)]{dudley2016vn}
Richard~M Dudley.
\newblock {VN Sudakov’s work on expected suprema of Gaussian processes}.
\newblock In \emph{High Dimensional Probability VII: The Carg{\`e}se Volume}, pages 37--43. Springer, 2016.

\bibitem[Dziugaite and Roy(2017)]{dziugaite2017computing}
Gintare~Karolina Dziugaite and Daniel~M Roy.
\newblock Computing nonvacuous generalization bounds for deep (stochastic) neural networks with many more parameters than training data.
\newblock \emph{arXiv preprint arXiv:1703.11008}, 2017.

\bibitem[Fernique(1975)]{fernique1975regularite}
Xavier Fernique.
\newblock R\'egularit\'e des trajectoires des fonctions al\'eatoires gaussiennes.
\newblock In \emph{\'Ecole d'\'Et\'e de Probabilit\'es de Saint-Flour IV -- 1974}, volume 480 of \emph{Lecture Notes in Mathematics}, pages 1--96. Springer, Berlin, 1975.

\bibitem[Jiao et~al.(2018)Jiao, Han, and Weissman]{jiao2018generalizations}
Jiantao Jiao, Yanjun Han, and Tsachy Weissman.
\newblock Generalizations of maximal inequalities to arbitrary selection rules.
\newblock \emph{Statistics \& Probability Letters}, 137:\penalty0 19--25, 2018.

\bibitem[Koltchinskii(2006)]{koltchinskii2006local}
Vladimir Koltchinskii.
\newblock Local rademacher complexities and oracle inequalities in risk minimization.
\newblock \emph{Ann. Statist.}, 34\penalty0 (6):\penalty0 2593--2656, 2006.

\bibitem[Koltchinskii(2011)]{koltchinskii2011oracle}
Vladimir Koltchinskii.
\newblock \emph{Oracle inequalities in empirical risk minimization and sparse recovery problems: Ecole D’Et{\'e} de Probabilit{\'e}s de Saint-Flour XXXVIII-2008}, volume 2033.
\newblock Springer, 2011.

\bibitem[Lecu{\'e} and Mendelson(2018)]{lecue2018regularization}
Guillaume Lecu{\'e} and Shahar Mendelson.
\newblock Regularization and the small-ball method i: sparse recovery.
\newblock \emph{The Annals of Statistics}, 46\penalty0 (2):\penalty0 611--641, 2018.

\bibitem[Ledoux and Talagrand(2013)]{ledoux2013probability}
Michel Ledoux and Michel Talagrand.
\newblock \emph{Probability in Banach Spaces: Isoperimetry and Processes}.
\newblock Springer Science \& Business Media, 2013.

\bibitem[Liu(2018)]{liu2018information}
Jingbo Liu.
\newblock \emph{Information theory from a functional viewpoint}.
\newblock PhD thesis, Princeton University, 2018.

\bibitem[Liu(2020)]{liu2020dispersion}
Jingbo Liu.
\newblock Dispersion bound for the {Wyner-Ahlswede-K{\"o}rner} network via a semigroup method on types.
\newblock \emph{IEEE Transactions on Information Theory}, 67\penalty0 (2):\penalty0 869--885, 2020.

\bibitem[Liu(2021)]{liu2021soft}
Jingbo Liu.
\newblock Soft minoration: solution to {Cover's} problem in the original discrete memoryless setting.
\newblock In \emph{2021 IEEE International Symposium on Information Theory (ISIT)}, pages 1648--1652, 2021.

\bibitem[Liu(2022)]{liu2020minoration}
Jingbo Liu.
\newblock Minoration via mixed volumes and {Cover}’s problem for general channels.
\newblock \emph{Probability Theory and Related Fields}, 183\penalty0 (1):\penalty0 315--357, 2022.

\bibitem[Liu(2023)]{liu2023soft}
Jingbo Liu.
\newblock From soft-minoration to information-constrained optimal transport and spiked tensor models.
\newblock In \emph{2023 IEEE International Symposium on Information Theory (ISIT)}, pages 666--671, 2023.

\bibitem[Liu(2026)]{liu2026}
Jingbo Liu.
\newblock Two-sided bounds for entropic optimal transport via a rate-distortion integral.
\newblock \emph{To appear in Proceedings of the IEEE International Symposium on Information Theory (ISIT), 2026}, 2026.

\bibitem[Liu and {\"O}zg{\"u}r(2020)]{liu2020capacity}
Jingbo Liu and Ayfer {\"O}zg{\"u}r.
\newblock Capacity upper bounds for the relay channel via reverse hypercontractivity.
\newblock \emph{IEEE Transactions on Information Theory}, 66\penalty0 (9):\penalty0 5448--5455, 2020.

\bibitem[Liu et~al.(2018)Liu, Courtade, Cuff, and Verd{\'u}]{liu2018forward}
Jingbo Liu, Thomas~A Courtade, Paul~W Cuff, and Sergio Verd{\'u}.
\newblock A forward-reverse {Brascamp-Lieb} inequality: Entropic duality and {Gaussian} optimality.
\newblock \emph{Entropy}, 20\penalty0 (6):\penalty0 418, 2018.

\bibitem[Liu et~al.(2020)Liu, van Handel, and Verd{\'u}]{liu2020second}
Jingbo Liu, Ramon van Handel, and Sergio Verd{\'u}.
\newblock Second-order converses via reverse hypercontractivity.
\newblock \emph{Mathematical Statistics and Learning}, 2\penalty0 (2):\penalty0 103--163, 2020.

\bibitem[Lugosi and Wegkamp(2004)]{lugosi2004complexity}
Gabor Lugosi and Marten Wegkamp.
\newblock Complexity regularization via localized random penalties.
\newblock \emph{Annals of statistics}, 32\penalty0 (4):\penalty0 1679--1697, 2004.

\bibitem[Maurer(2010)]{maurer2010majorizing}
Andreas Maurer.
\newblock Majorizing codes and measures.
\newblock \emph{preprint, available at http://www. andreas-maurer. eu/mmnotes. pdf}, 2010.

\bibitem[Maurey(2024)]{maurey}
Bernard Maurey.
\newblock {Fifty years ago, a theorem by Xavier Fernique}.
\newblock \url{ https://webusers.imj-prg.fr/~bernard.maurey/articles}, 2024.
\newblock Accessed: 2025-06-09.

\bibitem[Mendelson(2010)]{mendelson2010empirical}
Shahar Mendelson.
\newblock Empirical processes with a bounded $\psi_1$ diameter.
\newblock \emph{Geometric and Functional Analysis}, 20\penalty0 (4):\penalty0 988--1027, 2010.

\bibitem[Raskutti et~al.(2011)Raskutti, Wainwright, and Yu]{raskutti2011minimax}
Garvesh Raskutti, Martin~J Wainwright, and Bin Yu.
\newblock Minimax rates of estimation for high-dimensional linear regression over $\ell_q$-balls.
\newblock \emph{IEEE Transactions on Information Theory}, 57\penalty0 (10):\penalty0 6976--6994, 2011.

\bibitem[Russo and Zou(2016)]{russo2016controlling}
Daniel Russo and James Zou.
\newblock Controlling bias in adaptive data analysis using information theory.
\newblock In \emph{Artificial Intelligence and Statistics}, pages 1232--1240. PMLR, 2016.

\bibitem[Talagrand(1987)]{talagrand1987regularity}
Michel Talagrand.
\newblock Regularity of {Gaussian} processes.
\newblock \emph{Acta Mathematica}, 159\penalty0 (1):\penalty0 99--149, 1987.

\bibitem[Talagrand(1994)]{talagrand1994constructions}
Michel Talagrand.
\newblock Constructions of majorizing measures {Bernoulli} processes and cotype.
\newblock \emph{Geometric \& Functional Analysis GAFA}, 4:\penalty0 660--717, 1994.

\bibitem[Talagrand(2014)]{talagrand2014upper}
Michel Talagrand.
\newblock \emph{Upper and Lower Bounds for Stochastic Processes}, volume~60.
\newblock Springer, 2014.

\bibitem[Tsybakov(2009)]{tsybakov2009introduction}
Alexandre~B Tsybakov.
\newblock \emph{Introduction to Nonparametric Estimation}.
\newblock Springer Mathematics and Statistics, 2009.

\bibitem[Van~de Geer(1990)]{van1990estimating}
Sara Van~de Geer.
\newblock Estimating a regression function.
\newblock \emph{The Annals of Statistics}, 18\penalty0 (2):\penalty0 907--924, 1990.

\bibitem[Van~Handel(2014)]{van2014probability}
Ramon Van~Handel.
\newblock Probability in high dimension, 2014.
\newblock \url{https://web.math.princeton.edu/~rvan/APC550.pdf} Accessed: 2025-08-09.

\bibitem[van Handel(2025)]{van2025subgaussian}
Ramon van Handel.
\newblock On the subgaussian comparison theorem.
\newblock \emph{arXiv preprint arXiv:2512.18588}, 2025.

\bibitem[Vershynin()]{vershynin2018high}
Roman Vershynin.
\newblock \emph{High-Dimensional Probability: An Introduction with Applications in Data Science}.
\newblock Cambridge University Press, 2 edition.

\bibitem[Wainwright(2019)]{wainwright2019high}
Martin~J Wainwright.
\newblock \emph{High-Dimensional Statistics: A Non-asymptotic Viewpoint}, volume~48.
\newblock Cambridge university press, 2019.

\bibitem[Wei et~al.(2020)Wei, Fang, and Wainwright]{wei2020gauss}
Yuting Wei, Billy Fang, and Martin~J Wainwright.
\newblock {From Gauss to Kolmogorov: Localized measures of complexity for ellipses}.
\newblock \emph{Electronic Journal of Statistics}, 14\penalty0 (2):\penalty0 2988--3031, 2020.

\bibitem[Wu et~al.(2018)Wu, Barnes, and {\"O}zg{\"u}r]{wu2018capacity}
Xiugang Wu, Leighton~Pate Barnes, and Ayfer {\"O}zg{\"u}r.
\newblock {The capacity of the relay channel: Solution to Cover’s problem in the Gaussian case}.
\newblock \emph{IEEE Transactions on Information Theory}, 65\penalty0 (1):\penalty0 255--275, 2018.

\bibitem[Xu and Raginsky(2017)]{xu_raginsky}
Aolin Xu and Maxim Raginsky.
\newblock Information-theoretic analysis of generalization capability of learning algorithms.
\newblock \emph{Advances in Neural Information Processing Systems}, pages 2524--2533, 2017.

\end{thebibliography}
\end{document}